\newtheorem{thm}{Theorem}
\newtheorem{lemma}[thm]{Lemma}
\newtheorem{prop}[thm]{Propositon}
\newtheorem{cor}[thm]{Corollary}
\theoremstyle{remark}
\newtheorem{remark}[thm]{Remark}
\newcommand{\RR}{\ensuremath{\mathbb{R}}}
\newcommand{\N}{\ensuremath{\mathbb{N}}}
\newcommand{\B}{\ensuremath{\mathbb{B}}}
\renewcommand{\P}{\ensuremath{\mathbb{P}}}
\newcommand{\E}{\ensuremath{\mathbb{E}}}
\newcommand{\F}{\mathcal{F}}
\newcommand{\X}{\mathcal{X}}
\newcommand{\Y}{\mathcal{Y}}
\newcommand{\absX}[1]{|\widetilde{X}_{#1}|}
\newcommand{\I}{\mathbf{I}}
\newenvironment{itemize*}{\vspace{-10pt}\begin{itemize}\setlength{\itemsep}{0pt}\setlength{\parskip}{2pt}}{\end{itemize}}
\newenvironment{enumerate*}{\vspace{-10pt}\begin{enumerate}\setlength{\itemsep}{0pt}\setlength{\parskip}{2pt}}{\end{enumerate}}
\newenvironment{description*}{\vspace{-12pt}\begin{description}\setlength{\itemsep}{0pt}\setlength{\parskip}{2pt}}{\end{description}}
\newcommand{\Sp}{\ensuremath{\mathbb{S}}}
\newcommand{\abs}[1]{\left|#1\right|}
\newcommand{\set}[2]{\left\{#1\ ; \ #2  \right\}}
\renewcommand{\d}{\mathrm{d}}
\newcommand{\ind}{\ensuremath{\mathds{1}}}
\newcommand{\indu}[2]{\ensuremath{\mathds{1}(U_{#1}#2)}}
\title{Projections of spherical Brownian motion}
\keywords{non-Lipschitz stochastic differential equation; skew-product decomposition; pathwise uniqueness; Wright-Fisher diffusion}
\subjclass[2010]{60H10, 58J65}
\author{Aleksandar Mijatovi{\'c}}
\address{Department of Statistics, University of Warwick, \& The Alan Turing Institute, UK}
\email{a.mijatovic@warwick.ac.uk}
\author{Veno Mramor}
\address{Department of Statistics, University of Warwick, \& The Alan Turing Institute, UK}
\email{veno.mramor@gmail.com}
\author{Ger{\'o}nimo Uribe Bravo}
\address{Instituto de Matematicas, Universidad Nacional Aut{\'o}noma de M{\'e}xico, 
M{\'e}xico }
\email{geronimo@matem.unam.mx}
\thanks{AM is supported	by the EPSRC grant EP/P003818/1 and a Fellowship at The
Alan Turing Institute, sponsored by the Programme on Data-Centric Engineering
funded by Lloyd's Register Foundation; GUB supported by CoNaCyT grant FC-2016-1946 and UNAM-DGAPA-PAPIIT grant IN115217.}
\numberwithin{equation}{section}
\numberwithin{thm}{section}
\begin{document}
\begin{abstract}
We obtain a stochastic differential equation (SDE) satisfied by the first $n$
coordinates of a Brownian motion on the unit sphere in $\mathbb{R}^{n+\ell}$.
The SDE has non-Lipschitz coefficients but we are able to provide an analysis
of existence and pathwise uniqueness and show that they always hold. 
The square of the radial component is a Wright-Fisher diffusion with mutation and it features in a skew-product decomposition of the projected spherical Brownian motion. 
A more general SDE 
on the unit ball in  
$\mathbb{R}^{n+\ell}$
allows us to geometrically realize the Wright-Fisher diffusion with general non-negative parameters as the radial component of its solution.
\end{abstract}

\maketitle

\section{Introduction and main results}
The Theorem of Archimedes \cite{archimedes} states that the 
projection $\pi_1$
from the unit sphere
$\Sp^2\subset\RR^3$ to any coordinate (in $\RR^3$) preserves the 
uniform 
distribution; 
see \cite{MR3035124} and the references therein for a very modern account or \cite{MR3025901} for one with more probabilistic insight. 
In probabilistic language, 
if $U^{(2)}$ is a uniform random vector 
on $\Sp^2$,
then
$\pi_1(U^{(2)})$ is uniform on $[-1,1]$. 
In fact, this holds in any dimension $d\geq3$:
for a uniform random vector $U^{(d-1)}$ on the Euclidean unit sphere 
$\Sp^{d-1}:=\set{z\in \RR^{d}}{\abs{z}=1}$, its projection  
$\pi_{d-2}(U^{(d-1)})$ onto  any $d-2$ coordinates 
is uniform on the unit ball 
$\B^{d-2}:=\set{z \in \RR^{d-2}}{\abs{z}\le 1}$. 
A more general version of this result for spheres in the $p$-norm can be found in \cite{MR2123199}.

Since the 
uniform 
distribution 
on $\Sp^{d-1}$ is
the invariant measure for Brownian motion on the sphere,
it is natural to investigate the process obtained by projecting 
it to the ball 
$\B^{d-2}$.
Such a process ought to have 
a 
uniform distribution 
on $\B^{d-2}$
as its invariant measure. 
The aim of this paper is to give a complete characterization of such processes in terms of SDEs 
they satisfy and to deduce certain structural consequences of this characterisation. 

Let $Z$ be Brownian motion on the sphere $\Sp^{n+\ell-1}$ ($n,\ell\in \N$); 
although this is an instance of a Brownian motion on a Riemannian manifold, 
we will just use the Stroock representation (in It\^o form) and consider it as the solution of an equation 
\begin{equation} \label{BMsphere}
\d Z_t=(I-Z_tZ_t^\top) \d \widetilde{B}_t-\frac{n+\ell-1}{2}Z_t \d t,\qquad Z_0\in \Sp^{n+\ell-1},
\end{equation}
where 
$\widetilde B$ a Brownian motion on $\mathbb{R}^{n+\ell}$ and $I$ denotes the identity matrix of appropriate dimension 
(cf. \cite[Ch.3\S3, p. 83]{hsumanifolds}). 
\begin{prop}\label{projekcija}
Let $X$ denote the first $n$ coordinates of $Z$. 
Then, there exists a Brownian motion $B$ on $\RR^n$ such that
the  pair $(X,B)$ satisfies 
the SDE
\begin{align*}
\d X_t=\sigma(X_t)\d B_t  -\frac{n+\ell-1}{2}X_t \d t,\qquad X_0\in\B^n,
\intertext{where the volatility matrix $\sigma(x)$ takes the form}
\sigma(x)=I-\left(1-\sqrt{1-\abs{x}^2}\right)\frac{xx^\top}{\abs{x}^2}\ind(\abs{x}>0), \qquad x\in\B^n. 
\end{align*}
Pathwise uniqueness holds for this SDE and $X$ is a strong Markov process 
with a unique invariant measure which admits the  density 
$$h(x)=\frac{\Gamma((n+\ell)/2)}{\pi^{n/2}\Gamma(\ell/2)}\left(1-\abs{x}^2\right)^{(\ell-2)/2}\mathds{1}(\abs{x}\le1).$$
Furthermore, $U=|X|^2$ is a Wright-Fisher diffusion, i.e. 
there exists a scalar Brownian motion $\beta$ such that 
the pair $(U,\beta)$ 
satisfies the SDE
\begin{equation*}
\d U_t=2\sqrt{U_t(1-U_t)}d\, \beta_t+[ n(1-U_t)-\ell U_t ]\, dt. 
\end{equation*}
\end{prop}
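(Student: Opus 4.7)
Write $Z=(X,Y)^\top$ with $X\in\RR^n$ and $Y\in\RR^{\ell}$, so that \eqref{BMsphere} splits into a block form
\[
\d X_t=(I_n-X_tX_t^\top,\;-X_tY_t^\top)\,\d\widetilde B_t-\tfrac{n+\ell-1}{2}X_t\,\d t.
\]
A direct computation shows that the instantaneous covariance matrix of the martingale part equals $(I_n-X_tX_t^\top)^2+|Y_t|^2X_tX_t^\top=I_n-X_tX_t^\top$, after using $|X_t|^2+|Y_t|^2=1$. A second direct computation confirms $\sigma(x)\sigma(x)^\top=I_n-xx^\top$ (since $(1-\sqrt{1-\abs{x}^2})(1+\sqrt{1-\abs{x}^2})=\abs{x}^2$). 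Hence there is an $n$-dimensional Brownian motion $B$ such that $\sigma(X_t)\,\d B_t$ equals the martingale part of $\d X_t$; one constructs $B$ by applying a pseudo-inverse of $\sigma(X_t)$ where $\sigma$ is invertible (i.e.\ on $\{\abs{X_t}<1\}$) and completing with an independent $n$-dimensional Brownian motion on the exceptional set where $\sigma(X_t)$ is rank deficient.

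\textbf{Wright--Fisher equation for $U=\abs{X}^2$.} Applying It\^o's formula to $\abs{X_t}^2$ and using the identity $x^\top\sigma(x)=\sqrt{1-\abs{x}^2}\,x^\top$ (which is immediate from the definition of $\sigma$), the martingale part of $\d U_t$ equals $2\sqrt{1-\abs{X_t}^2}\,X_t^\top \d B_t$, with quadratic variation $4U_t(1-U_t)\,\d t$. The drift assembles as $n-\abs{X_t}^2-(n+\ell-1)\abs{X_t}^2=n(1-U_t)-\ell U_t$. L\'evy's characterisation (again, after enlarging the filtration with an independent scalar Brownian motion to cover the zero set $\{U_t=0\}$) yields a scalar Brownian motion $\beta$ with $2\sqrt{U_t(1-U_t)}\,\d\beta_t$ equal to the martingale part of $\d U_t$, giving the claimed Wright--Fisher SDE.

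\textbf{Invariant measure and strong Markov property.} The uniform distribution on $\Sp^{n+\ell-1}$ is invariant for $Z$; computing its pushforward under the coordinate projection $\pi$ by integrating out the $Y$-coordinates on each fiber $\{\abs{y}^2=1-\abs{x}^2\}$ (a sphere of radius $\sqrt{1-\abs{x}^2}$ in $\RR^\ell$) yields the density $h$ after inserting the correct Jacobian factor. Once pathwise uniqueness is established, the Yamada--Watanabe theorem provides a strong solution and the strong Markov property follows in the standard way.

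\textbf{Pathwise uniqueness} is the main obstacle, because $\sigma$ fails to be Lipschitz on the boundary $\{\abs{x}=1\}$. Away from this set, $\sigma$ and the drift are locally Lipschitz, so pathwise uniqueness holds up to the hitting time $\tau_\varepsilon:=\inf\set{t\ge0}{\abs{X_t}\ge 1-\varepsilon}$ for any $\varepsilon>0$ by the standard Gronwall/contraction argument. The remaining issue is the behaviour at $\{\abs{x}=1\}$. My strategy is to exploit the scalar SDE for $U$: its diffusion coefficient $2\sqrt{U(1-U)}$ is $\tfrac12$-H\"older and the drift is Lipschitz, so the Yamada--Watanabe theorem delivers pathwise uniqueness for $U$. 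Combining pathwise uniqueness of $U$ with the skew-product decomposition $X_t=\sqrt{U_t}\,\Theta_t$, where the angular component $\Theta_t\in\Sp^{n-1}$ can be written as a time-changed Brownian motion on $\Sp^{n-1}$ driven by the tangential projection of $B$ with time change $\int_0^t U_s^{-1}\,\d s$ (a process governed by a Lipschitz SDE on the compact manifold $\Sp^{n-1}$), upgrades pathwise uniqueness of $U$ and $\Theta$ to pathwise uniqueness of $X$.
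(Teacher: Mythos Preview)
Your derivation of the SDE for $X$, the Wright--Fisher equation for $U$, and the invariant density by pushing forward the uniform law on $\Sp^{n+\ell-1}$ are all fine and essentially match the paper (the paper constructs $B$ explicitly from $\widetilde B$ rather than via a pseudo-inverse, and checks invariance of $h$ through the skew-product rather than the pushforward, but these are cosmetic differences).

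The genuine gap is in your pathwise-uniqueness argument. You invoke Yamada--Watanabe for the scalar equation satisfied by $U$ and then the skew-product $X=\sqrt{U}\,\Theta$. The problem is that the driving Brownian motions of the $U$- and $\Theta$-equations are \emph{functionals of the solution $X$}, not of $B$ alone: $\beta_t=\int_0^t |X_s|^{-1}X_s^\top\,\d B_s$ on $\{|X_s|>0\}$, and likewise for the tangential Brownian motion driving $\Theta$. If $X$ and $\widetilde X$ are two solutions with the \emph{same} $B$, the associated $\beta$ and $\widetilde\beta$ are in general different Brownian motions, so Yamada--Watanabe for the Wright--Fisher SDE does not force $|X|^2=|\widetilde X|^2$ pathwise; the same obstruction applies to the angular part. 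What this argument does yield is uniqueness in \emph{law} (this is exactly the content of Corollary~\ref{uniqlaw} in the paper), not pathwise uniqueness.

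The paper handles pathwise uniqueness differently: it specialises to $\gamma\equiv1$, $g\equiv\tfrac{n+\ell-1}{2}$ and applies Theorem~\ref{pathuniq}, whose proof is a DeBlassie-type Gronwall argument on $|X-\widetilde X|^2+(Y^p-\widetilde Y^p)^2$ with $Y=1-|X|^2$ and a carefully chosen $p\in(\tfrac12,1)$. The numerical condition there, $\tfrac{\ell}{2}>\sqrt2-1$, is satisfied for every $\ell\in\N$. Note that your shortcut ``local Lipschitz away from $\{|x|=1\}$'' would in fact suffice when $\ell\ge2$ and $|X_0|<1$, since then $U$ (being $\mathrm{WF}(n,\ell)$ with $\ell\ge2$) never reaches $1$; but for $\ell=1$ the boundary is visited and the DeBlassie machinery, or something comparable, is genuinely needed.
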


\begin{remark} In the case $\ell=2$, the invariant measure of $X$
in Proposition~\ref{projekcija} is uniform on the unit ball $\B^n$, as expected from the theorem of Archimedes.
\end{remark}
\begin{remark}
The process $X$ enjoys a skew-product decomposition analogous to the one
of Brownian motion in $\RR^n$; it is a particular case of Theorem \ref{skew} below. 
\end{remark}
\begin{remark}
Since 
$\sigma(x)$
is the unique non-negative definite square root of the matrix $I-xx^\top$ for $x\in\B^n$, the SDE for the projected process $X$ 
implies that its infinitesimal generator equals 
$$\frac{1}{2}\sum_{i,j=1}^n(\delta_{ij}-x^ix^j)\frac{\partial^2}{\partial x^i
\partial x^j} -\sum_{i=1}^n\frac{n-1+\ell}{2}\frac{\partial}{\partial x^i}.$$
This operator equals the generator $\frac{1}{2}\Delta_{n-1+\ell,n}$ of a process
considered in~\cite{bakry1996remarques}. Our results show that the martingale
problem arising from this generator is well-posed.  
\end{remark}

More generally, and in order to state the skew-product decomposition, we will
consider the SDE
on the unit ball $\B^n$ given by 
\begin{equation} \label{enacbax}
\d X_t=\gamma(\abs{X_t})\sigma(X_t)\d B_t  -g(\abs{X_t})X_t \d t, 
\end{equation} with starting point $X_0=x_0\in\B^n$. 
Assume that $\gamma\colon [0,1]\to
(0,\infty)$ and  $g\colon[0,1]\to\RR$ are Lipschitz continuous\footnote{Note that by Lipschitz continuity
of the euclidean norm $\abs{\cdot}$, functions $\gamma(\abs{\cdot})$ and
$g(\abs{\cdot})$ are also Lipschitz.} and satisfy $\frac{g(1)}{\gamma^2(1)}\ge\frac{n-1}{2}.$
In particular, the above SDE extends the projected process $X$ of Proposition \ref{projekcija} to non-integer dimensions. 
The coefficients of SDE~\eqref{enacbax} are bounded and continuous implying that weak
existence holds  (see e.g.~\cite[Ch.~IV, Thm~2.2]{ikedawatanabe}). 
The function $\sigma$ is locally Lipschitz
only on the interior of the ball $\B^n$, making it impossible to apply the classical theory 
for the uniqueness of solutions of SDEs (note that we do not exclude the cases when either $X_0\in\Sp^{n-1}$ or the boundary
sphere is reached in finite time). 
The condition $ \frac{g(1)}{\gamma^2(1)}\ge\frac{n-1}{2}$ turns out to be
necessary for a solution to stay in the unit ball. 
In fact, if $\frac{g(1)}{\gamma^2(1)}=\frac{n-1}{2}$ and $X_0\in \Sp^{n-1}$, the solution $X$ is a 
Brownian motion on $\Sp^{n-1}$ time-changed by 
$t\mapsto\gamma^2(1)t$.

The radial component $R:=\abs{X}$ of a solution $X$ of
SDE~\eqref{enacbax} and its square $U:=\abs{X}^2$ are 
the unique strong solutions of the respective SDEs in~\eqref{enacbaradij} and
\eqref{prav_u} below; the latter reduces to the SDEs of Wright-Fisher diffusion with mutation in the setting of Proposition \ref{projekcija}. 
In particular, both processes are strong Markov.  
After a time-change, a pathwise comparison of  $U$ with a Wright-Fisher diffusion
implies that for $n\ge2$ the process $X$ never hits $0$ (see Lemma~\ref{enacbau} 
in Section~\ref{calc} below).
This enables us to define a time-change process
$S_s(t):=\int_s^t\frac{\gamma^2(R_u)}{R_u^2}\d u$, satisfying  
$\lim\limits_{t\to \infty}S_s(t)=\infty$, and its inverse $T_s\colon [0,\infty) \to
[s,\infty)$ (see Lemma~\ref{timechg} below).  
Moreover,
it turns out that 
$X$ possesses a skew-product decomposition analogous to the one
of Brownian motion on $\RR^n$.

\begin{thm}[Skew-product decomposition]\label{skew}
Let $n\ge2$ and $X$ be a solution of SDE~\eqref{enacbax}. 
Pick $s\in\RR_+:=[0,\infty)$
and assume
that either $s>0$ or $s=0$ and $X_0\neq 0$. Then the process
$\widehat{V}=(\widehat{V}_t)_{t\in\RR_+}$, given by $\widehat{V}_t:=X_{T_s(t)}/R_{T_s(t)}$, is a Brownian motion on
$\Sp^{n-1}$ (started at $\widehat{V}_0=X_s/R_s$) independent of $R$. 
Hence we obtain the skew-product decomposition
$X_t=R_t\widehat{V}_{S_s(t)}$ for $t\ge s$. Furthermore,
if $X_0=0$, then $\widehat{V}_t$ is uniformly distributed on $\Sp^{n-1}$ for
any $t>0$ and subsequently evolves as a stationary Brownian motion on the sphere. 
\end{thm}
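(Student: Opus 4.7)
The plan is to apply It\^o's formula to the angular process $V_t := X_t/R_t$ (well-defined for $t\ge s$ by the non-attainability of $0$ for $n\ge 2$ asserted before the theorem), perform the time change by $T_s$ to recognise the resulting SDE as that of Brownian motion on $\Sp^{n-1}$, and then exhibit a driving Brownian motion which is independent of $R$.

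First, the It\^o calculation for $V^i = X^i/R$. Two identities streamline matters: since $\sigma(X) = I - (1-\sqrt{1-R^2})VV^{\top}$, we have $V^{\top}\sigma(X) = \sqrt{1-R^2}\,V^{\top}$ and $(I-VV^{\top})\sigma(X) = I-VV^{\top}$. Combined with $\partial_j V^i = (\delta_{ij}-V^iV^j)/R$, the drift $-g(R)X$ contributes $0$ to $dV$ (using $V^{\top}X = R$), and the It\^o correction collapses to $-\frac{(n-1)\gamma^2(R)}{2R^2}V\,dt$. This yields
\begin{equation*}
dV_t = \frac{\gamma(R_t)}{R_t}(I - V_tV_t^{\top})\,dB_t - \frac{(n-1)\gamma^2(R_t)}{2R_t^2}\,V_t\,dt.
\end{equation*}

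Next, set $\widehat V_\tau := V_{T_s(\tau)}$; with $dt/d\tau = R^2/\gamma^2(R)$ the drift becomes $-\tfrac{n-1}{2}\widehat V_\tau\,d\tau$, and the time-changed martingale part $N_\tau$ has covariation matrix $\int_0^{\tau}(I-\widehat V_u\widehat V_u^{\top})\,du$. On an enlargement of the probability space with an auxiliary independent $n$-dimensional Brownian motion $\xi$, I would define $d\widetilde W_\tau := dN_\tau + \widehat V_\tau\widehat V_\tau^{\top}\,d\xi_\tau$. Using $\widehat V^{\top}dN = 0$ (orthogonality in covariation) and $(\widehat V\widehat V^{\top})^2 = \widehat V\widehat V^{\top}$, one checks $d\langle\widetilde W\rangle = I\,d\tau$, so by L\'evy's theorem $\widetilde W$ is an $\mathbb{R}^n$-Brownian motion, and $(I-\widehat V\widehat V^{\top})d\widetilde W = dN$. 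Hence $d\widehat V_\tau = (I-\widehat V_\tau\widehat V_\tau^{\top})d\widetilde W_\tau - \tfrac{n-1}{2}\widehat V_\tau\,d\tau$, which is precisely SDE~\eqref{BMsphere} for Brownian motion on $\Sp^{n-1}$.

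The main obstacle is the independence $\widetilde W\perp R$. Here I would use the orthogonal decomposition of $B$: the scalar Brownian motion $\beta^{r}_t := \int_0^t V_u^{\top}dB_u$ drives $R^2$ (since $dR^2 = 2\gamma(R)R\sqrt{1-R^2}\,d\beta^{r} + [\cdots]\,dt$), so $R$ is pathwise-uniquely determined from $\beta^{r}$ and $R_0$. The complementary martingale $\int(I-VV^{\top})dB$ is orthogonal to $\beta^{r}$ in covariation; after further enlarging with an independent $n$-dim Brownian motion $\zeta$, the process $W := \int(I-VV^{\top})dB + \int VV^{\top}d\zeta$ is an $\mathbb{R}^n$-Brownian motion, and $(W,\beta^{r})$ is an $(n+1)$-dimensional Brownian motion by L\'evy, so $W\perp\beta^{r}$ and hence $W\perp R$. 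Since $(I-VV^{\top})dB = (I-VV^{\top})dW$, the martingale $N$ depends only on $(W,R)$; once $\xi$ is chosen independent of $(\beta^{r},W)$, the quadratic variation $\langle\widetilde W\rangle_\tau = \tau I$ is deterministic, so the conditional L\'evy theorem forces the law of $\widetilde W$ given $R$ to be the $n$-dimensional Wiener measure, yielding $\widetilde W\perp R$.

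Finally, the case $X_0 = 0$ follows from rotational invariance of SDE~\eqref{enacbax}: for any orthogonal matrix $O$, if $X$ solves the SDE driven by $B$ then $OX$ solves the same SDE driven by $OB$. Starting from the fixed point $0$, the law of $X_t$ is thus rotationally invariant for every $t>0$, and by the standard decomposition of spherically symmetric laws $V_t = X_t/R_t$ is uniform on $\Sp^{n-1}$ and independent of $R_t$; the stationarity of $\widehat V$ then follows because the uniform measure is invariant for Brownian motion on $\Sp^{n-1}$.
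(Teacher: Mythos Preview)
Your It\^o-formula derivation for $V$, the time-change, and the construction of a driving Brownian motion $\widetilde W$ for $\widehat V$ all match the paper's approach (the paper uses a scalar auxiliary $\xi$ rather than an $n$-dimensional one, but either works). The genuine gap is in the independence step $\widetilde W\perp R$. Your claim that ``$N$ depends only on $(W,R)$'' is false: after rewriting $(I-VV^\top)\,dB=(I-VV^\top)\,dW$ you still have $N_\tau=\int_s^{T_s(\tau)}\frac{\gamma(R_u)}{R_u}(I-V_uV_u^\top)\,dW_u$, whose integrand contains $V_u$, which is not a functional of $(W,R)$ alone---that is precisely what is in question. More seriously, the appeal to a ``conditional L\'evy theorem'' is not valid as stated: the fact that $\langle\widetilde W\rangle_\tau=\tau I$ is deterministic makes $\widetilde W$ a Brownian motion in its own filtration, but says nothing about its conditional law given $\sigma(R_u:u\ge0)$ unless the latter is contained in $\mathcal G_0$, which it is not here. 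The paper closes this gap with Knight's theorem: in the time-changed filtration $\mathcal G_\tau=\mathcal F_{T_s(\tau)}$, the $\mathcal G$-Brownian motion driving $\widehat V$ and the $\mathcal G$-local martingale $\eta_\tau:=\beta^r_{T_s(\tau)}-\beta^r_s$ have zero cross-bracket, so the multidimensional Dambis--Dubins--Schwarz theorem makes them, after their individual time-changes, independent Brownian motions; since reverting the second time-change recovers $(\beta^r_{s+t}-\beta^r_s)_{t\ge0}$, and the first Brownian motion is also independent of $\mathcal G_0=\mathcal F_s$, one obtains independence from all of $\beta^r$ and hence from $R=\Phi(\beta^r)$.

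For $X_0=0$ you take a different route from the paper, which invokes the \emph{rapid spinning} $\lim_{s\downarrow0}S_s(t)=\infty$ (Lemma~\ref{timechg}) together with \cite[Lemma~3.12]{elldiff}. Your rotational-invariance idea is a legitimate alternative, but it must be applied at the \emph{process} level: since the law of $(OX_t)_{t\ge0}$ equals that of $(X_t)_{t\ge0}$ for every orthogonal $O$, the conditional law of $V$ given the entire path of $R$ is rotationally invariant, whence $V_s$ is uniform on $\Sp^{n-1}$ and independent of the whole process $R$; the first part of the theorem (applied with $s>0$) then yields the stationarity. As written, your argument only gives $V_t\perp R_t$ at each fixed time, which is not enough.
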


Since the skew-product decomposition expresses $X$ as a measurable functional of a pair of independent processes 
$(R,\widehat{V})$ with given distributions, the following result holds. 
\begin{cor}\label{uniqlaw}
	Uniqueness in law holds for SDE~\eqref{enacbax}.
\end{cor}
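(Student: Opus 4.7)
The plan is to read off uniqueness in law directly from the skew-product decomposition of Theorem~\ref{skew}, which identifies $X$ on $[s,\infty)$ with a measurable functional of the pair $(R,\widehat{V})$. Thus, if we can verify that the joint law of $(R,\widehat{V})$ is the same for every weak solution of SDE~\eqref{enacbax}, uniqueness in law for $X$ follows.

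I would handle the case $X_0\neq 0$ first. Each marginal law is already pinned down: $R$ (equivalently $U=R^2$) is the unique strong solution of the scalar SDE~\eqref{enacbaradij} (resp.~\eqref{prav_u}) with deterministic initial value $|X_0|$, so its path-space law depends only on the coefficients $\gamma,g$ and on $X_0$; and $\widehat{V}$ is a Brownian motion on $\Sp^{n-1}$ with deterministic starting point $X_0/|X_0|$, whose law is classical. Independence of $R$ and $\widehat{V}$ then determines their joint law. Since Theorem~\ref{skew} exhibits $X_t=R_t\widehat{V}_{S_0(t)}$, and the time change $S_0(t)=\int_0^t \gamma^2(R_u)/R_u^2\,\d u$ is manifestly a measurable functional of the path of $R$, pushing this joint law forward through the (common) measurable map $(R,\widehat{V})\mapsto X$ yields a uniquely determined law for $X$ on $C([0,\infty),\B^n)$.

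For $X_0=0$ the above argument cannot be run at $s=0$, because the initial angular part $X_0/|X_0|$ is undefined. The remedy is the stationarity clause of Theorem~\ref{skew}: for each $s>0$ the process $\widehat{V}$ is uniform on $\Sp^{n-1}$ at time $s$ and thereafter evolves as a stationary Brownian motion on the sphere, so its law on $[s,\infty)$ is again a fixed distribution independent of the chosen weak solution. Re-running the previous paragraph on $[s,\infty)$ uniquely determines the law of $(X_t)_{t\ge s}$ for every $s>0$. Letting $s\downarrow 0$, together with the deterministic initial value $X_0=0$ and the continuity of sample paths, shows that all finite-dimensional distributions of $X$ on $[0,\infty)$ are uniquely determined, which fixes the law on $C([0,\infty),\B^n)$.

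The only real obstacle is the $X_0=0$ case, and Theorem~\ref{skew} has already done the necessary work by supplying the stationarity assertion; what remains is the elementary $s\downarrow 0$ reduction via finite-dimensional distributions. Everything else is a direct consequence of the independence of $R$ and $\widehat{V}$ and of the fact that each marginal law is fully determined by the coefficients of SDE~\eqref{enacbax} and the initial condition.
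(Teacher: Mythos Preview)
Your argument for $n\ge 2$ is correct and is essentially the paper's own route: the skew-product of Theorem~\ref{skew} expresses $X$ as a measurable functional of the independent pair $(R,\widehat{V})$, whose joint law is pinned down by the coefficients and the initial condition, so pushing forward yields a unique law for $X$. Your treatment of the $X_0=0$ case is in fact slightly more streamlined than the paper's (which appeals to an external lemma from \cite{elldiff} to identify the conditional finite-dimensional distributions of $X/R$ given $\mathcal{F}^R_\infty$): you invoke directly the last clause of Theorem~\ref{skew}, which already tells you that $\widehat{V}$ is a stationary spherical Brownian motion independent of $R$, so on $[s,\infty)$ the law of $X$ is again the push-forward of a product law, and the $s\downarrow 0$ reduction via finite-dimensional distributions is immediate.

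There is, however, a genuine omission: Theorem~\ref{skew} requires $n\ge 2$, so your argument never addresses the case $n=1$, which the corollary does cover. The paper dispatches this case in one line by observing that for $n=1$ the SDE reduces to $\d X^1_t=\gamma(|X^1_t|)\sqrt{1-(X^1_t)^2}\,\d B_t-g(|X^1_t|)X^1_t\,\d t$, whose diffusion coefficient is $\tfrac12$-H\"older, so Yamada--Watanabe gives pathwise uniqueness and hence uniqueness in law. You should add this sentence before invoking the skew-product.
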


The pathwise uniqueness of SDE~\eqref{enacbax} is more delicate because 
$\sigma$ is not Lipschitz at the boundary of the ball $\B^n$. 
Since $\sigma$ is locally Lipschitz, pathwise uniqueness holds up to the first hitting time 
of the boundary by well established arguments.  
Hence, if 
$\frac{g(u)}{\gamma^2(u)}\ge\frac{n-1}{2}+1$ holds for $u$ sufficiently close to one, $\abs{X_0}<1$, and
$X$ never visits the boundary of $\B^n$, then pathwise uniqueness holds. 
If $\frac{g(1)}{\gamma^2(1)}=\frac{n-1}{2}$, $X$ behaves as time-changed Brownian motion 
on $\Sp^{n-1}$ after the first time it hits the boundary and hence 
pathwise uniqueness also holds in this case. 
If $n=1$, the SDE~\eqref{enacbax} simplifies to
$\d X^1_{t}=
\gamma(\abs{X^1_t})\sqrt{1-(X^1_{t})^2}\d{B}_t-{g}(\abs{X^1_t})X^1_t\d t$
and pathwise uniqueness holds by a theorem of Yamada and Watanabe \cite[IX.3.5]{revuzyor}. 
Pathwise uniqueness of the similar looking equation 
$\d X_t=\gamma(\abs{X_t})(1-\abs{X_t}^2)^{1/2}\d B_t  -g(\abs{X_t})X_t \d t $ 
on $\B^n$, where $\gamma$ and $g$ are positive Lipschitz function and $\frac{g(1)}{\gamma^2(1)}$
is sufficiently large,
was established by DeBlassie~\cite{deblassie-uniq} by a clever generalisation 
of the idea in~\cite{swart-uniq}.
Note that the diffusion coefficient  in the SDE~\eqref{enacbax}
depends on $x$ and not just on its length $\abs{x}$, making it impossible to 
apply the result of~\cite{deblassie-uniq}.
However, it is possible to adapt the method of~\cite{deblassie-uniq} to our setting
and obtain:

\begin{thm}\label{pathuniq}
If $\frac{g(1)}{\gamma^2(1)}-\frac{n-1}{2}>\sqrt{2}-1\doteq0.4142$, then pathwise uniqueness holds for the SDE~\eqref{enacbax}.
\end{thm}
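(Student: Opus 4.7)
The plan is to adapt the pathwise-uniqueness argument of DeBlassie~\cite{deblassie-uniq}, which itself refines an idea of Swart~\cite{swart-uniq}, to the tensor-valued diffusion coefficient of SDE~\eqref{enacbax}. Let $X$ and $Y$ be two strong solutions driven by the same Brownian motion $B$ with $X_0=Y_0\in\B^n$. Since $\sigma$ is locally Lipschitz on the open ball and the drift is globally Lipschitz, a standard localisation gives $X_t=Y_t$ up to the first time $\tau$ at which either process reaches $\Sp^{n-1}$. By the strong Markov property it therefore suffices to establish the conclusion when $X_0=Y_0\in\Sp^{n-1}$.

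The It\^o computation is organised around the decomposition
$$\sigma(x)-\sigma(y)=\bigl(\sqrt{1-|x|^2}-\sqrt{1-|y|^2}\bigr)\frac{yy^\top}{|y|^2}+\alpha(|x|)\Bigl(\frac{yy^\top}{|y|^2}-\frac{xx^\top}{|x|^2}\Bigr),\qquad \alpha(r):=1-\sqrt{1-r^2},$$
which separates the mismatch into a ``radial'' H\"older-$1/2$ piece vanishing on $\Sp^{n-1}$ and a ``tangential'' piece that is Lipschitz in $x-y$ as long as $|x|,|y|$ stay bounded away from zero (guaranteed on any finite horizon by Lemma~\ref{enacbau}, since $n\ge 2$). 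The tangential term contributes a Gr\"onwall-ready $C\,\mathbb{E}|X_t-Y_t|^2$, while the radial term yields an expression bounded by $\bigl|\,|X|^2-|Y|^2\,\bigr|$, which the Yamada--Watanabe technique applied to the Wright--Fisher-like SDE satisfied by $|X|^2$ and $|Y|^2$ absorbs into the same loop.

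Following~\cite{deblassie-uniq}, I would apply It\^o's formula not to $|X_t-Y_t|^2$ alone but to a Lyapunov functional of the form $V(x,y)=|x-y|^2+C(1-|x|^2)^p(1-|y|^2)^p$ for a carefully chosen exponent $p$. The role of the boundary term is to absorb, against the restoring drift $-g(|X|)X$, the singular $(1-|X|^2)^{-1/2}$ factor produced by the second-order part of $\sigma$ near $\Sp^{n-1}$. The admissibility of the exponent $p$ reduces to the positivity of a quadratic of the form $z^2+2z-1>0$ in the variable $z:=g(1)/\gamma^2(1)-(n-1)/2$; the positive root of this quadratic is precisely $\sqrt{2}-1$, which explains the numerical threshold in the statement. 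Once such a $p$ is fixed, Gr\"onwall closes with $V(X_0,Y_0)=0$ and therefore $X\equiv Y$.

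The hardest part will be this last optimisation. Unlike the scalar-volatility setting of~\cite{deblassie-uniq}, the tensorial form of $\sigma$ produces cross-terms in the It\^o expansion that couple the radial and angular components of $X-Y$; verifying that these cross-terms can be absorbed without degrading the balance between boundary singularity and restoring drift is what forces the particular value $\sqrt{2}-1$ and constitutes the main technical novelty relative to~\cite{deblassie-uniq}.
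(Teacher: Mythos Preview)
Your overall strategy---localise to the boundary and adapt DeBlassie's argument---matches the paper's, but the Lyapunov functional you propose is the wrong one and the argument cannot close with it. You write $V(x,y)=|x-y|^2+C(1-|x|^2)^p(1-|y|^2)^p$. This $V$ does \emph{not} vanish on the diagonal $\{x=y\}$ inside the open ball, so a Gr\"onwall inequality $\mathbb{E}[V_t]\le C\int_0^t\mathbb{E}[V_s]\,\mathrm{d}s$ with $V_0=0$ cannot hold: in the uniqueness case $X\equiv Y$ the process $V_t=(1-|X_t|^2)^{2p}$ starts at zero on $\Sp^{n-1}$ and immediately becomes strictly positive, which is incompatible with such a bound. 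The correct functional, used both by DeBlassie and in the paper, is
\[
W_t=|X_t-\widetilde X_t|^2+\bigl((1-|X_t|^2)^p-(1-|\widetilde X_t|^2)^p\bigr)^2,
\]
i.e.\ a \emph{difference} of $p$-th powers squared, not a product. This $W$ vanishes exactly when $X=\widetilde X$, and the role of the second summand is that the negative singular piece of its drift (the term the paper calls $2(Y^p-\widetilde Y^p)\mathbf I_1$) cancels the positive singular pieces coming from the quadratic variation ($\mathbf I_3$) and from the volatility mismatch ($\mathbf I_5$). The cancellation is organised through the auxiliary nonnegative process $Z_t=(Y_t^p-\widetilde Y_t^p)(\widetilde Y_t^{p-1}-Y_t^{p-1})$.

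Your account of where the threshold $\sqrt{2}-1$ comes from is also off. It is \emph{not} forced by the tensorial cross-terms of $\sigma$; those are new relative to \cite{deblassie-uniq} but are absorbed harmlessly: the angular part of $\mathbf I_5$ is bounded by $C|X_t-\widetilde X_t|^2$ via the elementary inequality $|x|^2|y|^2-(x\cdot y)^2\le |x-y|^2$ for $x,y\in\B^n$, and the radial part is the same as DeBlassie's. The constant $\sqrt{2}-1$ arises purely from the scalar optimisation already present in \cite{deblassie-uniq}: one needs the coefficient of $Z_t$ in $2(Y^p-\widetilde Y^p)\mathbf I_1+\mathbf I_3$ to be negative, which requires $\tfrac{g(1)}{\gamma^2(1)}-\tfrac{n-1}{2}$ to exceed $\min_{p\in(1/2,1)}\bigl(1-p+\tfrac{(2p-1)^2}{4(1-p)}\bigr)=\sqrt{2}-1$, attained at $p=1-\sqrt{2}/4$. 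The shift by $(n-1)/2$ is the only structural change from DeBlassie's threshold and comes from the drift of $1-|X|^2$, not from the tensor volatility.
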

The remaining cases, when $n\ge2$ and
$\frac{g(1)}{\gamma^2(1)}-\frac{n-1}{2}\in(0,\sqrt{2}-1]$,  
are left open.

\section{
Characterization of the projected process
} \label{calc}
     We are interested in the process consisting of the first  $n$ coordinates of Brownian motion on the sphere $\Sp^{n-1+\ell}.$  One way of constructing such a Brownian motion is via the Stroock representation, i.e. a solution to the SDE \eqref{BMsphere}.
     Note that coefficients of SDE~\eqref{BMsphere} are locally Lipschitz continuous, so pathwise uniqueness holds. 

	In the following proof as well as in the rest of the paper superscripts will denote components of vectors, e.g. $X^i$ means $i$-th component of process $X$. When we wish to express powers we will enclose variables in additional sets of parentheses.
    
     \begin{proof}[Proof of Proposition \ref{projekcija}]
	 Let $X'$ denote last $\ell$ coordinates of $Z$ and similarly split $\widetilde{B}=(\widetilde{B}^1,\widetilde{B}^2).$ We claim that the process $B$ given by the equation    	
     		\begin{align*}
     	B_t&=\int_0^t\sigma(X_s)\d \widetilde{B}^1_s+\int_0^t\left(-X_s{X'_s}^\top(1-\abs{X_s}^2)^{-1/2}\ind(\abs{X_s}<1)+X_sz^\top\ind(\abs{X_s}=1)\right)\d \widetilde{B}_t^2,
     	\end{align*}
     		 where $z\in \Sp^{\ell-1}$ is an arbitrary (fixed) unit vector, is an $n$\babelhyphen{nobreak}dimensional Brownian motion.
     	
 Note that $\abs{{X'_t}}^2=\abs{Z_t}^2-\abs{X_t}^2=1-\abs{X_t}^2$. Furthermore, let us consider the $n\times (n+\ell)$ matrix $$A_t:=\begin{bmatrix}\sigma(X_t), -X_t{X'_t}^\top(1-\abs{X_t}^2)^{-1/2}\ind(\abs{X_t}<1)+X_tz^\top\ind(\abs{X_t}=1) \end{bmatrix}=:\begin{bmatrix}\sigma(X_t),D_t \end{bmatrix}.$$ 
 The choice of the constant vector $z$ in the definition of $A_t$ will turn out not to be relevant as we will see that the time spent by $X$ at the boundary of the ball has Lebesgue measure zero. We can compute $\sigma(X_t)^2=\sigma(X_t)\sigma(X_t)^\top=I-X_tX_t^\top$ and $D_tD_t^\top=X_tX_t^\top,$ 
 so  it follows that $ A_tA_t^\top=\sigma(X_t)\sigma(X_t)^\top+D_tD_t^\top=I.$
     Since  $B$ is defined by $B_t=\int_0^tA_s\d \widetilde{B}_s$ and it is a continuous local martingale with quadratic variation $\langle {B}^i,{B}^j\rangle_t=\int_0^t(A_sA_s^\top)_{ij}\d s=\delta_{ij}t,$ it is $n$\babelhyphen{nobreak}dimensional Brownian motion by Levy's characterization theorem. 
           Further calculations show that 
          $\sigma(X_t)D_t=-X_t{X'_t}^\top$ and finally, the facts and the definition of $Z$, imply the SDE satisfied by $X$: 
    \begin{align*}
        \d X_t&=(I-X_tX_t^\top)\d \widetilde{B}_t^1-X_t{X'_t}^\top\d \widetilde{B}_t^2-\frac{n-1+\ell}{2}X_t\d t\\
        &=\sigma(X_t)^2\d \widetilde{B}_t^1+\sigma(X_t)D_t\d \widetilde{B}_t^2-\frac{n-1+\ell}{2}X_t\d t\\
        &=\sigma(X_t)A_t\d \widetilde{B}_t-\frac{n-1+\ell}{2}X_t\d t=\sigma(X_t)\d B_t-\frac{n-1+\ell}{2}X_t\d t. 
    \end{align*}
    The above SDE is just a special case of SDE \eqref{enacbax} with $\gamma \equiv 1$ and $g\equiv \frac{n-1+\ell}{2}$, therefore pathwise uniqueness holds immediately by Theorem~\ref{pathuniq} since $\frac{g(1)}{\gamma^2(1)}-\frac{n-1}{2}=\frac{\ell}{2}>\sqrt{2}-1$ for $\ell \in \N$. Consequently $X$ is a strong Markov process. Furthermore, Lemma~\ref{enacbau} shows that $U=\abs{X}^2$ is Wright-Fisher diffusion with  mutation rates $n$ and $\ell$. When  $n\ge2$ this also helps us find invariant measure for process $X$ since we can use skew-product decomposition in Theorem~\ref{skew}. The invariant measure for Wright-Fisher diffusion $U$ is given by
    $\mathrm{Beta}(n/2,\ell/2)$ distribution. Hence 
    $g(r)=B_{n,\ell}r^{n-1}(1-r^2)^{(\ell-2)/2}\mathds{1}(r\in [0,1])$ 
    is the density of the 
    invariant measure of $R$.
    The invariant  measure for Brownian motion on a sphere is a normalised uniform measure. This
    continues to hold for the time-changed Brownian motion on a sphere as long as 
    the time change is independent of Brownian motion. So let us suppose that the initial distribution of the
    process $X$ has the density $h$ from Proposition~\ref{projekcija}. Then, using polar coordinates and the skew-product
    decomposition, the density of $R_0$ is 
    $g$. 
    Since this density is invariant for $R$, $R_t$ has density $g$ 
    for all $t\ge0$. The time changed Brownian motion on a sphere also
    remains uniformly distributed and reversing polar coordinates we
    get that $X_t$ has density $h$ for any $t$. 
        
    In the case $n=1$ we see that the process $X^1$ satisfies the SDE $\d X^1_{t}=
    \sqrt{1-(X^1_{t})^2}\d{B}_t-\frac{\ell}{2}X^1_t\d t$, and by the forward Kolmogorov equation 
    the invariant density can easily be seen to be equal to
    \begin{equation*}
    	h(x)=\frac{\Gamma((1+\ell)/2)}{\pi^{1/2}\Gamma(\ell/2)}\left(1-x^2\right)^{(\ell-2)/2}\mathds{1}(\abs{x}\le1). \qedhere
    \end{equation*}
    \end{proof}

\subsection{Skew-product decomposition for SDE~\eqref{enacbax}}
     The solution of \eqref{enacbax} naturally lives on the closed unit ball
$\B^n$. To better understand such a process it is crucial to understand its
radial component (or its square) and in particular if and when it hits the boundary
(or zero). Since the square of the radial component will be shown to be closely related to Wright-Fisher diffusion let us first collect some known fact about the latter. Fix non-negative parameters $\alpha$ and $\beta$. 
The SDE $\d
\mathcal{Z}_t=2\sqrt{\mathcal{Z}_t(1-\mathcal{Z}_t)}\d B_t +
(\alpha(1-\mathcal{Z}_t)-\beta \mathcal{Z}_t)\d t$ has a unique strong solution which is called the 
Wright-Fisher diffusion;  denote it by $\mathrm{WF}(\alpha,\beta)$. It is possible to define
$\mathrm{WF}(\alpha,\beta)$  for negative $\alpha$ (resp. $\beta$), but
then the solution has a finite lifetime equal to the first hitting time of $0$
(resp. $1$).   Pathwise uniqueness is a consequence of $1/2$-H\"older continuity of diffusion coefficient and then applying theorem of Yamada and Watanabe \cite[IX.3.5]{revuzyor}.  Furthermore, it is well-known\footnote{It can for example be seen from Lemma~\ref{kvocient} and well-known facts about hitting of $0$ of Bessel processes.} that $\mathrm{WF}(\alpha,\beta)$  never hits $0$  for $\alpha\ge2$ and never hits $1$ for $\beta\ge2$.

Some of the facts about square of radial component of solution to SDE \eqref{enacbax} are summarized in the following lemma.
     
     \begin{lemma}\label{enacbau}
        Let $X$ be a solution of \eqref{enacbax} where $\frac{g(1)}{\gamma^2(1)}\ge\frac{n-1}{2}$. 
        Then the process $U=\abs{X}^2$ satisfies the SDE
        \begin{equation} \label{prav_u}
        	\d U_t=2\widetilde{\gamma}(U_t)\sqrt{U_t(1-U_t)}\d\theta_t + \widetilde{\gamma}^2(U_t)\left(n(1-U_t)-\left(\frac{2\widetilde{g}(U_t)}{\widetilde{\gamma}^2(U_t)}-(n-1)\right)U_t\right)\d t,
        \end{equation}
        where $\widetilde{g}(u):=g(\sqrt{u}), \widetilde{\gamma}(u):=\gamma(\sqrt{u})$, and the
       $\F_t$-Brownian motion $\theta$ is defined by $$ \theta_t=\sum\limits_{i=1}^{n}\int_0^t\frac{X^i_{s}}{\sqrt{U_s}}\indu{s}{>0}\d {B}_s^i+\int_0^t\indu{s}{=0}\d \chi_s,$$ where the scalar Brownian motion $\chi$ is independent of ${B}$.

        For $n\ge2$, the process $U$ never hits $0$ and if $\frac{g(u)}{\gamma^2(u)}\ge\frac{n-1}{2}+1$ holds near $1$, 
        then $U$ never hits $1$.
     \end{lemma}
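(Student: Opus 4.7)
The plan has two stages: an Itô computation to derive the SDE for $U$, and a pathwise comparison with Wright-Fisher diffusions to rule out attainability of the endpoints.

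\textbf{Deriving the SDE.} Applying Itô's formula to $U_t=\abs{X_t}^2=\sum_{i=1}^n (X_t^i)^2$ and using \eqref{enacbax} together with the identity $\sigma(x)\sigma(x)^\top = I - xx^\top$ (established in the proof of Proposition~\ref{projekcija}), one obtains
\[ dU_t = 2\gamma(\abs{X_t})X_t^\top\sigma(X_t)\, dB_t - 2g(\abs{X_t})U_t\, dt + \gamma^2(\abs{X_t})(n-U_t)\, dt. \]
The explicit form of $\sigma$ yields $x^\top\sigma(x) = \sqrt{1-\abs{x}^2}\, x^\top$ (trivially at $x=0$, and by direct calculation on $\{\abs{x}>0\}$), so the martingale part becomes $2\widetilde\gamma(U_t)\sqrt{U_t(1-U_t)}\,(X_t^\top/\sqrt{U_t})\ind(U_t>0)\, dB_t$, while elementary algebra rearranges the drift into the form stated in \eqref{prav_u}. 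The process $\theta$ defined in the lemma is a continuous local martingale whose quadratic variation computes to
\[ \int_0^t \frac{\abs{X_s}^2}{U_s}\ind(U_s>0)\, ds + \int_0^t \ind(U_s=0)\, ds = t, \]
so Lévy's characterization identifies it as a Brownian motion (after enlarging the probability space, if needed, to carry the independent $\chi$; since $\chi$ contributes only on $\{U_s=0\}$, where $\sqrt{U_s(1-U_s)}$ vanishes, it does not affect the dynamics of $U$).

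\textbf{Non-attainability of the endpoints.} Time-change by $\tau(t):=\int_0^t\widetilde\gamma^2(U_s)\,ds$, which is proper since $\gamma$ is continuous and strictly positive on $[0,1]$. The process $\widehat U := U\circ\tau^{-1}$ then satisfies
\[ d\widehat U = 2\sqrt{\widehat U(1-\widehat U)}\,d\widehat\theta + \bigl[n(1-\widehat U) - c(\widehat U)\widehat U\bigr]\,d\tau, \]
where $c(u):=2\widetilde g(u)/\widetilde\gamma^2(u)-(n-1)$ is continuous on $[0,1]$ and $\widehat\theta$ is the time-changed driving Brownian motion. For non-attainability of $0$ when $n\ge 2$, pick $K\ge\sup_{[0,1]}c$ and let $V$ solve the $\mathrm{WF}(n,K)$ SDE driven by $\widehat\theta$ with $V_0=\widehat U_0$. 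The drift of $\widehat U$ dominates that of $V$, the diffusion coefficients coincide and are $1/2$-Hölder, so the pathwise comparison theorem for one-dimensional SDEs (\cite[IX.3.7]{revuzyor}) yields $\widehat U\ge V$ almost surely; since $\mathrm{WF}(n,K)$ never reaches $0$ for $n\ge 2$, neither do $\widehat U$ and $U$. For non-attainability of $1$ under $g(u)/\gamma^2(u)\ge(n-1)/2+1$ near $1$, pick $\epsilon>0$ with $c(u)\ge 2$ on $[1-\epsilon,1]$: on every excursion of $\widehat U$ into this strip the drift is dominated above by that of $\mathrm{WF}(n,2)$, and iterating the comparison at successive entry/exit times of $[1-\epsilon,1)$ via the strong Markov property shows that $\widehat U$, and hence $U$, cannot reach $1$.

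\textbf{Main obstacle.} The Itô calculation is routine; the delicate step is the upper-boundary argument, since the drift inequality $c(u)\ge 2$ holds only near $1$, forcing a local-to-global bootstrap via excursions and the strong Markov property rather than a single global comparison. A cleaner (though less constructive) alternative is to invoke Feller's boundary classification for $\widehat U$ directly: the scale density near $1$ behaves like $(1-u)^{-c(1)/2}$, whose integral toward $1$ diverges exactly when $c(1)\ge 2$, matching the stated hypothesis.
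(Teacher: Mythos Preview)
Your proof is correct and follows essentially the same route as the paper: It\^o's formula plus L\'evy's characterisation for the SDE, then the time-change by $\int_0^t\widetilde\gamma^2(U_s)\,ds$ followed by pathwise comparison with $\mathrm{WF}(n,K)$ (for the lower endpoint) and locally with $\mathrm{WF}(n,2)$ (for the upper endpoint). The paper's argument near $1$ is phrased as ``at least locally (near $1$) use the comparison theorem'' without spelling out the excursion/strong Markov bootstrap you give, so your write-up is in fact slightly more explicit on that point; your Feller-test alternative is not in the paper but is a valid and arguably cleaner substitute.
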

     \begin{proof}
		Since $U=\abs{X}^2$ and $ \langle X^{i}\rangle_t=\gamma^2(\abs{X_t})(1-(X^i_{t})^2)\d t$ we can apply It\^o's formula and get
     \begin{align*}
        \d U_t
        &=2\gamma(\abs{X_t})\sum_{j=1}^{n}X^j_{t}\sqrt{1-U_t}\d {B}^j_t -2g(\abs{X_t})\sum_{i=1}^{n}(X^i_{t})^2\d t+\gamma^2(\abs{X_t})\sum_{i=1}^{n}(1-(X^i_{t})^2)\d t
    \end{align*} which in turn yields \eqref{prav_u}. Moreover, since $\langle\theta\rangle_t=t$, the continuous local martingale $\theta$ is a Brownian motion by Levy's characterization theorem. 

       We can slightly simplify the equation \eqref{prav_u} by time-change without affecting the boundary hitting properties. Define $q$ by
       \begin{equation*}
       q_t:=\int_0^{t}\gamma^2(\abs{X_u})\d u\text{ and its  inverse }\widetilde{q}_t:=\inf\set{u\ge 0}{q(u)=t}.
       \end{equation*} Then $\widehat{U}_t:=U_{\widetilde{q}_t}$ satisfies the SDE\begin{equation} \label{tchgU}
 	\d \widehat{U}_t=2\sqrt{\widehat{U}_t(1-\widehat{U}_t)}\d\widetilde{\theta}_t + \left( n(1-\widehat{U}_t)-\left(\frac{2\widetilde{g}(\widehat{U}_t)}{\widetilde{\gamma}^2(\widehat{U}_t)}-(n-1)\right)\widehat{U}_t\right)\d t
 \end{equation}
  where $\widetilde{\theta}_t=\int_0^{\widetilde{q}_t}\gamma(\abs{X_u})\d
\theta_u$ is also a Brownian motion. This is almost the same equation as that of a Wright-Fisher diffusion. The volatility term is exactly the same so that the difference appears only in the drift term, which is, nevertheless, still 
Lipschitz
continuous\footnote{The function $u\mapsto
uf(\sqrt{u})$ is Lipschitz if $f$ is. Hence, $u\mapsto u\frac{\widetilde{g}(u)}{\widetilde{\gamma}^2(u)}$ is Lipschitz continuous. 
}. Again we can use the  Yamada-Watanabe Theorem \cite[Theorem~3.5 in Ch. IX]{revuzyor} and conclude that
pathwise uniqueness holds for
the  SDE~\eqref{tchgU}. 
   Since $   \frac{g(1)}{\gamma^2(1)}\ge\frac{n-1}{2}$ we can consider SDE~\eqref{tchgU} with the increased drift  
   \begin{equation*}
   n(1-u)-\left(\frac{2\widetilde{g}(u)}{\widetilde{\gamma}^2(u)}- \frac{2\widetilde{g}(1)}{\widetilde{\gamma}^2(1)}\right)u
   \end{equation*}and for $\widehat{U}_0=1$ such an equation has the unique solution $\widehat{U}_t\equiv1$. By using the comparison theorem for SDEs, as in  \cite[ Ch. IX, (3.7)]{revuzyor}, we deduce that the inequality $\widehat{U}_t\le1$ holds for the solution of \eqref{tchgU} as long as $\widehat{U}_0\le1$ a.s. By denoting
   \begin{equation*}
   M:=\max_{u\in [0,1]}\left(\frac{2\widetilde{g}(u)}{\widetilde{\gamma}^2(u)}-(n-1)\right)\ge0, 
   \end{equation*}another application of the comparison theorem shows that the solution of \eqref{tchgU} is always larger than $\mathrm{WF}(n,M)$ started at the same point, so it is always non-negative and therefore $U_t\in [0,1]$ for $U_0\in[0,1]$ a.s. which also means it has infinite lifetime.
    This second use of comparison theorem also shows, that for $n\ge2$, the processes  $\widehat{U}$ and $U$ never hit $0$ unless they start there. Similarly, if $\frac{g(u)}{\gamma^2(u)}\ge\frac{n-1}{2}+1$ holds near $1$, we could at least locally (near 1) use comparison theorem to show that $\widehat{U}$ is smaller than $\mathrm{WF}(n,2)$ started at the same point and therefore under such conditions $\widehat{U}$ (and $U$) never hits $1$.
     \end{proof}

	Before we can prove Theorem~\ref{skew} we need two additional lemmas. The first one expresses the Wright-Fisher diffusion as a certain skew-product of two independent squared Bessel processes. For notation and basic facts  about (squared) Bessel processes see \cite[Chapter XI]{revuzyor}. 
	\begin{lemma} \label{kvocient}
		Let $\alpha\ge0,\beta\in \RR$ and let $\X$ be $\mathrm{BES}Q^\alpha$ process started at $x_0\ge0$ and $\Y$ be independent $\mathrm{BES}Q^\beta$ process started at $y_0\ge0$ such that $x_0+y_0>0$. 
Let $T_0({\Y}):=\inf\set{t\ge0}{\Y_t=0}$. 
 Define the continuous additive functional $\rho$ as
 \begin{equation*}\rho_t=\int_0^t\frac{1}{\X_u+\Y_u}\d u\text{ and its inverse }\zeta_t=\inf\set{u\ge0}{\rho_u=t}. 
 \end{equation*}Let ${U_t}:=\frac{\X_t}{\X_t+\Y_t}$ for $t<T_0({\Y})$. Then $\widehat{U}_t:={U}_{\zeta_t}$ for $t<\rho_{T_0(\Y)}$ is a $\mathrm{WF}(\alpha,\beta)$ started at $\frac{x_0}{x_0+y_0}$ and $\widehat{U}$ is independent of $\X+\Y.$
	\end{lemma}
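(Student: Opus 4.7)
The plan is to first derive an SDE for $U$ on the interval $[0, T_0(\Y))$ by It\^o's formula, then perform the time change $\zeta$, and finally establish independence of $\widehat{U}$ and $\X + \Y$ through an orthogonality argument.

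Set $S:=\X+\Y$ and write $\d\X_t = 2\sqrt{\X_t}\d B^1_t + \alpha\d t$, $\d\Y_t = 2\sqrt{\Y_t}\d B^2_t + \beta\d t$ for two independent scalar Brownian motions $B^1,B^2$. By Knight's theorem applied to the orthogonal pair of continuous martingales
\[
W_t:=\int_0^t\sqrt{\tfrac{\X_u}{S_u}}\d B^1_u+\sqrt{\tfrac{\Y_u}{S_u}}\d B^2_u,\qquad W''_t:=\int_0^t\sqrt{\tfrac{\Y_u}{S_u}}\d B^1_u-\sqrt{\tfrac{\X_u}{S_u}}\d B^2_u,
\]
the pair $(W,W'')$ is a standard 2-dimensional Brownian motion (both have quadratic variation $t$ and $\langle W,W''\rangle\equiv 0$). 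Rewriting, $\d S_t=2\sqrt{S_t}\d W_t+(\alpha+\beta)\d t$, so $S$ is a $\mathrm{BESQ}^{\alpha+\beta}$ driven only by $W$. Applying It\^o's formula to $U_t=\X_t/S_t$ (valid on $\{S>0\}\supseteq [0,T_0(\Y))$), after a short computation the cross terms cancel and
\[
\d U_t=\frac{2\sqrt{U_t(1-U_t)}}{\sqrt{S_t}}\d W''_t+\frac{\alpha(1-U_t)-\beta U_t}{S_t}\d t,
\]
i.e.\ the martingale part of $U$ is driven entirely by $W''$, and $\langle S,U\rangle\equiv 0$.

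Next I apply the time change $\zeta_t=\inf\{u\geq 0:\rho_u=t\}$, which is measurable with respect to $\sigma(S)\subseteq\sigma(W)$. Using $\dot{\zeta}_t=S_{\zeta_t}$ and the standard time-change formula, both the drift and the quadratic variation of $U$ get multiplied by $S_{\zeta_t}$, so
\[
\d\widehat{U}_t=2\sqrt{\widehat{U}_t(1-\widehat{U}_t)}\d\widehat{\beta}_t+\bigl(\alpha(1-\widehat{U}_t)-\beta\widehat{U}_t\bigr)\d t,\qquad \widehat{\beta}_t:=\int_0^{\zeta_t}\frac{1}{\sqrt{S_u}}\d W''_u.
\]
The process $\widehat{\beta}$ is a continuous local martingale with $\langle\widehat{\beta}\rangle_t=\int_0^{\zeta_t}S_u^{-1}\d u=\rho_{\zeta_t}=t$, so $\widehat{\beta}$ is a Brownian motion by L\'evy's theorem, and $\widehat{U}$ is a $\mathrm{WF}(\alpha,\beta)$ started at $x_0/(x_0+y_0)$.

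For the independence claim, condition on $\sigma(W)$ (which carries $S$ and $\zeta$). Since $W''$ is independent of $W$ and the integrand $\mathbf{1}_{[0,\zeta_t]}(u)/\sqrt{S_u}$ is $\sigma(W)$-measurable, conditionally on $\sigma(W)$ the process $\widehat{\beta}$ is a centered Gaussian process with covariance
\[
\E[\widehat{\beta}_s\widehat{\beta}_t\mid\sigma(W)]=\int_0^{\zeta_{s\wedge t}}\frac{1}{S_u}\d u=s\wedge t,
\]
which does not depend on $W$. Hence $\widehat{\beta}$ is a Brownian motion independent of $\sigma(W)$, and a fortiori independent of $S=\X+\Y$. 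Because pathwise uniqueness holds for the $\mathrm{WF}(\alpha,\beta)$ SDE (Yamada--Watanabe), $\widehat{U}$ is a measurable functional of $\widehat{\beta}$, so $\widehat{U}$ is independent of $\X+\Y$.

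The main delicate points I expect are the bookkeeping of the time change (keeping straight which quantities live on which time scale, and handling the possibility that $\widehat{U}$ may hit $0$ or $1$ so that DDS-type arguments need care) and justifying that the conditional-Gaussian argument is enough to conclude full independence; both are standard but require writing cleanly.
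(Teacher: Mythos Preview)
Your proof is correct and is essentially the argument of Warren--Yor \cite[Proposition~8]{WarrenYorJacobi} written out in full; the paper's own proof is nothing more than a citation of that result together with the remark that restricting to $[0,T_0(\Y))$ makes the same computation go through when $\beta<0$. Two minor points: invoking Knight's theorem for $(W,W'')$ is unnecessary since both already have quadratic variation $t$ and zero cross-variation, so L\'evy's characterisation suffices; and your assertion $\sigma(S)\subseteq\sigma(W)$ uses that $S$ is a \emph{strong} solution of its $\mathrm{BESQ}^{\alpha+\beta}$ SDE, which is justified by Yamada--Watanabe on $[0,T_0(\Y))\subseteq\{S>0\}$ but deserves a word.
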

\begin{proof} 
The lemma is proved in \cite[Propositon 8]{WarrenYorJacobi} for
non-negative coefficients $\alpha,\beta$ and the larger stopping time
$T_0(\X+\Y)=\inf\set{t\ge0}{\X_t+\Y_t=0}$, where they use the term Jacobi
diffusion for particular Wright-Fisher diffusions. The same proof works for 
$\beta<0$, since 
$T_0({\Y})<T_0(\X+\Y)$
and hence on the stochastic interval $[0,T_0({\Y}))$ 
all the processes in the proof of~\cite[Propositon~8]{WarrenYorJacobi}
are well defined and all calculations stay exactly the same.  
\end{proof}
	
	Our next lemma summarizes facts about the time-change used in Theorem~\ref{skew}.
	\begin{lemma} \label{timechg} 
		Let $n\ge2$ and either $s>0$ or $s=0$ and $X_0\neq 0$. Define
		\begin{equation*}S_s(t):=\int_s^t\frac{\gamma^2(R_u)}{R_u^2}\d u . 
		\end{equation*}Then $S_s\colon [s,\infty)\to \RR_+$ is continuous, strictly increasing, and $\lim\limits_{t\to \infty}S_s(t)=\infty$. Its right continuous inverse  $T_s\colon \RR_+\to [s,\infty)$ 
		is also continuous and strictly increasing with $T_s(0)=s.$ Furthermore, if $X_0=0$, then $\lim\limits_{s\downarrow 0}S_s(t)=\infty$ holds for any $t>0$. 
	\end{lemma}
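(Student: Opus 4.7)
The plan is as follows. For the first two assertions, the crux is that $R_u>0$ for all $u\ge s$ almost surely: by Lemma~\ref{enacbau} (with $n\ge 2$), $U=R^2$ cannot hit $0$ except possibly at time $0$, and the latter is excluded by the hypotheses ($s>0$ or $X_0\neq 0$). Hence $u\mapsto \gamma^2(R_u)/R_u^2$ is continuous and strictly positive on $[s,\infty)$, so $S_s$ is continuous and strictly increasing. Since $\gamma$ is continuous and strictly positive on the compact $[0,1]$ it is bounded below by some $c>0$, and $R_u\le 1$, so the integrand exceeds $c^2$ and $S_s(t)\ge c^2(t-s)\to\infty$. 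The stated properties of $T_s$ then follow from the elementary inverse-function theorem for continuous strictly increasing bijections $[s,\infty)\to[0,\infty)$.

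For the last claim, with $X_0=0$, monotone convergence yields $\lim_{s\downarrow 0}S_s(t)=\int_0^t \gamma^2(R_u)/R_u^2\,\d u$, so it suffices to prove $\int_0^t \d u/U_u=\infty$ a.s. I would apply the bi-Lipschitz time-change $q_t=\int_0^t \gamma^2(R_u)\,\d u$ from the proof of Lemma~\ref{enacbau} to rewrite this integral, up to a positive multiplicative constant, as $\int_0^{q_t}\d v/\widehat{U}_v$, where $\widehat{U}$ solves \eqref{tchgU} with $\widehat{U}_0=0$; since $q_t>0$, the task reduces to showing $\int_0^\tau \d v/\widehat{U}_v=\infty$ for any $\tau>0$. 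The drift of $\widehat{U}$, namely $n(1-\widehat{U})-h(\widehat{U})\widehat{U}$ with $h:=2\widetilde g/\widetilde\gamma^2-(n-1)\ge 0$, is pointwise dominated by the drift $n(1-Z)$ of $Z\sim\mathrm{WF}(n,0)$; the Yamada--Watanabe comparison theorem (Revuz--Yor Ch.~IX, Thm.~3.7) then yields $\widehat{U}\le Z$ and so $\int \d v/\widehat{U}\ge\int \d v/Z$. Lemma~\ref{kvocient} represents $Z$, for $t$ in a right-neighbourhood of $0$, as $\X_{\zeta_t}/(\X_{\zeta_t}+\Y_{\zeta_t})$ with $\X\sim\mathrm{BESQ}(n)$ at $0$ and independent $\Y\sim\mathrm{BESQ}(0)$ at some $y_0>0$; a direct change of variable turns $\int_0^\tau \d t/Z_t$ into $\int_0^{\zeta_\tau}\d u/\X_u$.

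Everything therefore reduces to the BESQ fact: for $\X\sim\mathrm{BESQ}(n)$ started at $0$, $I_\eta:=\int_0^\eta \d u/\X_u=\infty$ a.s.\ for every $\eta>0$. By the self-similarity $(\X_{c\cdot})\stackrel{d}{=}(c\X_\cdot)$, one gets $I_\eta\stackrel{d}{=}I_{c\eta}$ for all $c>0$, so the law of $I_\eta$ is independent of $\eta>0$. The event $\{I_{0+}<\infty\}$ lies in $\F_{0+}$ and so by Blumenthal's $0$-$1$ law has probability $0$ or $1$; probability $1$ would force $I_{0+}=0$ a.s.\ (absolute continuity of the Lebesgue integral), whence by the scale invariance $I_1\stackrel{d}{=}0$, contradicting $I_1>0$ a.s. Hence $I_{0+}=\infty$ a.s., and the desired divergence follows. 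The main obstacle is this final reduction: $\widehat{U}$ is neither a BESQ nor an autonomous Wright--Fisher process, and it is the combination of the comparison with $\mathrm{WF}(n,0)$ and Lemma~\ref{kvocient}'s skew-product representation that bridges from the non-autonomous SDE~\eqref{tchgU} for $\widehat{U}$ to a clean BESQ integral at the origin.
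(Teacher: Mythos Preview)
Your overall strategy coincides with the paper's: reduce the last claim, via the time change $q$ and the comparison theorem, to the divergence of $\int_0^\tau\d u/\X_u$ for a $\mathrm{BESQ}(n)$ process started at $0$, using the skew-product representation of Lemma~\ref{kvocient}. Your self-contained scaling/Blumenthal argument for the $\mathrm{BESQ}$ divergence is a pleasant alternative to the paper's citation of Cherny's result, and it is correct: since $I_\eta\overset{d}{=}I_1$ for all $\eta>0$ and $I_\eta\downarrow I_{0+}$ a.s., one sees $I_{0+}\in\{0,\infty\}$ a.s.; the $0$--$1$ law then forces $I_1\overset{d}{=}\delta_0$ on the event of finiteness, contradicting $I_1>0$.

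There is, however, one genuine gap in your comparison step. You assert that $h:=2\widetilde g/\widetilde\gamma^2-(n-1)\ge 0$, but the standing hypotheses on~\eqref{enacbax} only give $g(1)/\gamma^2(1)\ge (n-1)/2$, i.e.\ $h(1)\ge 0$; nothing prevents $h$ from being negative on part of $[0,1)$ (indeed $g$ is merely Lipschitz with no sign condition). Consequently the drift of $\widehat U$ need not be dominated by that of $\mathrm{WF}(n,0)$, and the inequality $\widehat U\le Z$ can fail. The remedy, which is exactly what the paper does, is to compare instead with $\mathcal U\sim\mathrm{WF}(n,m)$ where $m:=\min_{u\in[0,1]}h(u)$ (possibly negative): then the drift of $\mathcal U$ dominates that of $\widehat U$ pointwise, so $\widehat U\le\mathcal U$ and $\int\d v/\widehat U\ge\int\d v/\mathcal U$. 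One then invokes Lemma~\ref{kvocient} with $\Y\sim\mathrm{BESQ}(m)$ (allowed there for $m<0$), restricting to $t<T_1(\mathcal U)=\rho_{T_0(\Y)}$, which is strictly positive; your $\mathrm{BESQ}$ divergence argument applies verbatim on $[0,\zeta_\tau\wedge T_0(\Y)]$. With this correction your proof is complete and matches the paper's.
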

     \begin{proof}Since a.s. $R_t^2\in (0,1]$ for any $t\ge s$, $R_t$ is continuous in $t$, and also $\gamma(R_t)$ is continuous in $t$ and bounded away from $0$, everything but the last claim follows from classical results on inverses of strictly increasing continuous functions. Since $\lim\limits_{s\downarrow0}\int_s^t\frac{\gamma^2(R_u)}{R_u^2}\d u=\int_0^t\frac{\gamma^2(R_u)}{R_u^2}\d u$, in order to prove the last claim it is enough to prove that $\int_0^t\frac{\gamma^2(R_u)}{R_u^2}\d u=\infty$ for any $t>0$. Changing variables with the time change $q_t$ from Lemma~\ref{enacbau} we get
     \begin{equation*}\int_0^t\frac{\gamma^2(R_u)}{R_u^2}\d u=\int_0^t\widehat{U}_{q_u}^{-1}\d q_u=\int_0^{q_t}\widehat{U}^{-1}_u\d u. 
     \end{equation*}By the comparison theorem for SDEs and strict positivity of $q_t$ it is therefore enough to prove that $\int_0^{t\wedge T_1(\mathcal{U})}\mathcal{U}_u^{-1}\d u=\infty$ for any $t>0$, where $\mathcal{U}$ is Wright-Fisher($n,m$) diffusion started at $0$,  
     \begin{equation*}
     m:=\min_{u\in [0,1]}\left(\frac{2\widetilde{g}(u)}{\widetilde{\gamma}^2(u)}-(n-1)\right) 
     \end{equation*}is a possibly negative constant, and $T_1(\mathcal{U})=\inf\set{t\ge0}{\mathcal{U}_t=1}$ is strictly positive. By Lemma~\ref{kvocient} we can find a $\mathrm{BES}Q^n$ process $\X$ started at $0$ and a $\mathrm{BES}Q^{m}$ process $\Y$ not started at $0$ such that $\mathcal{U}_t=\frac{\X_{\zeta_t}}{\X_{\zeta_t}+\Y_{\zeta_t}}$ holds for $t<T_1(\mathcal{U})=\rho_{T_0(\Y)}$ with the time changes $\zeta$ and $\rho$ defined as in Lemma~\ref{kvocient}. Using the time change formula for the Lebesgue-Stieltjes integral we get
     \begin{equation*}
     \int_0^{t\wedge T_1(\mathcal{U})}\mathcal{U}_t^{-1}\d u=\int_0^{t\wedge T_1(\mathcal{U})}\frac{\X_{\zeta_u}+\Y_{\zeta_u}}{\X_{\zeta_u}}\d u=\int_0^{t\wedge T_1(\mathcal{U})}\X_{\zeta_u}^{-1}\d \zeta_u=\int_0^{\zeta_t\wedge \zeta_{T_1(\mathcal{U})}}\X_u^{-1}\d u. 
     \end{equation*}This corresponds to the integral $\int_0^{\zeta_t\wedge \zeta_{T_1(\mathcal{U})}}h(\sqrt{\X_u})\d u$ where $h(x)=x^{-2}$ and $\sqrt{\X}$ is Bessel process started at $0$ with parameter $n\ge2$. Since $\zeta_{T_1(\mathcal{U})}=T_0(\Y)>0$, $\zeta_t>0$ for any $t>0$ and $-2\le-(2\wedge n)$, Corollary 2.4 in \cite{chernybesselconv} implies that the integral $S_0(t)$ diverges. 
     \end{proof}

     \begin{proof}[Proof of Theorem~\ref{skew}]
        Let us define function $\kappa (x^1,\ldots,x^n):=\left(\sum_{i=1}^{n}(x^i)^2\right)^{1/2}.$ Then $R_t=\kappa(X_t), V^i_{t}=\frac{\partial \kappa}{\partial x^i}(X_t)$ and a straightforward computation shows that
        \begin{equation*}\frac{\partial^2 \kappa}{\partial x^i\partial x^j}(x)=r^{-3}(\delta_{ij}r^2-x^ix^j)
        \text{ and }
        \frac{\partial^3 \kappa}{\partial x^i\partial x^j\partial x^k}(x)=3r^{-5}x^ix^jx^k-r^{-3}\left(x^i\delta_{jk}+x^j\delta_{ik}+x^k\delta_{ij}\right),
        \end{equation*}where $r:=\kappa(x).$ Note that also $\d\langle X^{i}, X^{j}\rangle_t=\gamma^2(\abs{X_t})(\delta_{ij}-X^i_{t}X^j_{t})\d t$. Now we can use the It\^o formula to get equations for $R_t$ and $V_t.$ 
        First, 
        \begin{align*}
        \d R_t
        &=\gamma(\abs{X_t})\sum_{j=1}^{n}\frac{X^j_{t}}{R_t}\sqrt{1-R_t^2}\d {B}^j_t -g(\abs{X_t})R_t\d t + \frac{\gamma^2(\abs{X_t})}{2}\sum_{i,j=1}^{n}\frac{\delta_{ij}R_t^2-X^i_{t}X^j_{t}}{R_t^3}(\delta_{ij}-X^i_{t}X^j_{t})\d t\\
         &=\gamma(R_t)\sum_{j=1}^{n}V^j_{t}\sqrt{1-R_t^2}\d {B}^j_t + \frac{(n-1)\gamma^2(R_t) -2g(R_t) R_t^2}{2R_t}\d t.
    \end{align*}
  Since $\d\theta_t=\sum_{j=1}^{n}V^j_{t}\d {B}^j_t$ is also $1$\babelhyphen{nobreak}dimensional Brownian motion (actually, it is the same process as in Lemma~\ref{enacbau}) we write the above equation in a more compact way as 
  \begin{equation} \label{enacbaradij}
  	\d R_t=\gamma(R_t)\sqrt{1-R_t^2}\d\theta_t  + ((n-1)\gamma^2(R_t) -2g(R_t) R_t^2)/({2R_t})\d t.
  \end{equation}
   We can also write an equation for $V^i_{t}.$
      \begin{align*}
        \d V^i_{t}&=\sum_{j=1}^{n}\frac{\delta_{ij}R_t^2-X^i_{t}X^j_{t}}{R_t^3}\d X^j_{t} +\frac{1}{2}\sum_{j,k=1}^{n}\left(\dfrac{\partial^3 \kappa}{\partial x^i\partial x^j\partial x^k}(X_t)\right)\d \langle X^{j}, X^{k}\rangle_t
    \end{align*}
    Similar calculations as above show that first sum is equal to 
   $\gamma(R_t)\sum_{j=1}^{n}\frac{\delta_{ij}-V^i_{t}V^j_{t}}{R_t}\d {B}_t^j$
     where the drift part vanishes and we compute the second sum as
    \begin{align*}
        &\frac{\gamma^2(R_t)}{2}R_t^{-5}\sum_{j,k=1}^{n}\left(3X^i_{t}X^j_{t}X^k_{t} -(X^i_{t}\delta_{jk}+X^j_{t}\delta_{ik}+X^k_{t}\delta_{ij})R_t^2)(\delta_{jk}-X^j_{t}X^k_{t}\right)\d t=\\
        &=\frac{\gamma^2(R_t)}{2}R_t^{-5}\left(3X^i_{t}R_t^2-(nX^i_{t}+X^i_{t}+X^i_{t})R_t^2-3X^i_{t}R_t^4+(X^i_{t}+X^i_{t}+X^i_{t})R_t^4\right)\d t\\
        &=-\gamma(R_t)\frac{n-1}{2}\frac{X^i_{t}}{R_t^3}\d t=-\frac{\gamma(R_t)}{R_t^2}\frac{n-1}{2}V^i_{t}\d t. 
    \end{align*}Altogether, in vector form, we get $ \d V_t= \frac{\gamma(R_t)}{R_t}(I-V_{t}V^\top_{t})\d {B}_t -\frac{\gamma^2(R_t)}{R_t^2}\cdot\frac{n-1}{2}V_{t}\d t.$    
    To show that the process $\widehat{V}_t=V_{T_t}$ is Brownian motion on a sphere it is sufficient to show that it satisfies SDE~\eqref{BMsphere}.
    Change of time formula for the It\^o and the Lebesgue-Stieltjes integrals immediately shows that  $\widehat{V}_t-\widehat{V}_0=\int_0^t(I-\widehat{V}_u\widehat{V}_u^\top)\d \widetilde{W}_u-\int_0^t\frac{n-1}{2}\widehat{V}_u\d u$, where $\widetilde{W}_t =  \int_s^{T_s(t)} \frac{\gamma(R_u)}{R_u}\d {B}_u$ is Brownian motion. This immediately implies that $\widehat{V}$ is Brownian motion on $\Sp^{n-1}$ but does not allow us to conclude that $R$ and $\widehat{V}$ are independent. With this in mind we modify the Brownian motion driving the SDE for $\widehat{V}$. Let us enlarge the probability space to accommodate another scalar Brownian motion $\xi$ which is independent of $B$ and define a continuous local martingale
    $ W_t =  \int_s^{T_s(t)} \frac{\gamma(R_u)}{R_u}(I-V_{u}V^\top_{u})\d {B}_u +\int_s^{T_s(t)}\frac{\gamma(R_u)}{R_u}V_{u} \d \xi_u.$ Then we can compute $\langle W^i,W^j \rangle_t=\int_s^{T_s(t)}\delta_{ij}\frac{\gamma^2(R_u)}{R_u^2}\d u=\delta_{ij}t,$ so $W$ is $\mathcal{G}_t$-Brownian motion, where $\mathcal{G}_t:=\F_{T_s(t)	}$. Since
    $(I-V_uV_u^\top)\cdot \begin{bmatrix}\frac{\gamma(R_u)}{R_u}(I-V_{u}V^\top_{u}) ,& \frac{\gamma(R_u)}{R_u}V_{u} \end{bmatrix}=\begin{bmatrix}\frac{\gamma(R_u)}{R_u}(I-V_{u}V^\top_{u}),& 0 \end{bmatrix} $
    we can use change of time formula for stochastic and Lebesgue-Stieltjes integral \cite[Chapter V, \S 1]{revuzyor} and we get
    $\widehat{V}_t-\widehat{V}_0=\int_0^t(I-\widehat{V}_u\widehat{V}_u^\top)\d W_u-\int_0^t\frac{n-1}{2}\widehat{V}_u\d u$
    so that $\widehat{V}$ is a Brownian motion on $\Sp^{n-1}$.
    
    To prove independence of $\widehat{V}$ and $R$ it is enough to prove independence of Brownian motions $W$ and $\theta$ which are driving the respective SDEs. Then $\widehat{V}$ and $R$ are strong solutions 
    to their corresponding SDEs which are driven by independent Brownian motions, so they are also independent. This holds since we note that for a strong solution $X$ of some SDE there exists a measurable map $\Phi,$ such that $X=\Phi(\widetilde{B}),$ where $\widetilde{B}$
    is Brownian motion driving the SDE c.f. \cite{chernybessel,yamadawatanabe}. Therefore we can find measurable maps $\Phi_1,\Phi_2$, such that $\widehat{V}=\Phi_1(W), R=\Phi_2(\theta)$ and independence does indeed follow from the independence of $\theta$ and $W$.
The    Markov property implies that $W$ depends on $\mathcal{G}_0=\F_s$ only
through $W_0=0,$ so $W$ is independent of $\F_s$. Hence $W$ is independent of
$(\theta_{t})_{t\in [0,s]}$. Therefore, it is enough to prove that $W$ is
independent of $(\theta_{t}-\theta_s)_{t\ge s}$. Define
$\eta_t:=\theta_{T_s(t)}-\theta_s =\int_s^{T_s(t)} V_u^\top\d {B}_u$ so that
$\eta$ is a $\mathcal{G}_t$-local martingale. Simple calculation shows that
$\langle W_i,\eta \rangle_t=0$ and $\langle \eta\rangle_t=T_s(t)-s$ with
inverse $S_s(t+s).$  We then use Knight's Theorem (also known as the multidimensional
Dambis-Dubins-Schwarz Theorem found in \cite[Chapter V, Theorem 1.9]{revuzyor}) to show
that $W$ and $(\eta_{S_s(t+s)})_{s\ge t}=(\theta_{s+t}-\theta_s)_{s \ge t}$ are
independent Brownian motions.
    
To address the last statement we need to consider the situation when
the solution is started from $0$.  The evolution of such a process is given by
$(R_t\phi_{S_s(t)},t\ge s)$ where $R$ is a square root of a solution to
SDE~\eqref{prav_u} and $\phi$ is an independent Brownian motion on the sphere
started at $\phi_0={X_s}/R_s.$ Due to \emph{rapid spinning} (i.e.
$\lim\limits_{s\downarrow0}S_s(t)=\infty$), the initial point
$\phi_0={X_s}/R_s$ will be forced to be uniformly distributed on the sphere.
This follows from the properties of the skew-product decomposition established in this proof,
Lemma~\ref{timechg} above and 
\cite[Lemma~3.12]{elldiff}.
\end{proof}

    \begin{proof}[Proof of Corollary~\ref{uniqlaw} ]
    For $n=1$, pathwise uniqueness holds so uniqueness in law follows trivially.
    Now let $n\ge2$. When $x_0\neq 0$, by Theorem~\ref{skew}, the
solution $X$ is a measurable functional of two indpendent processes $R$ and
$\widehat{V}$ with given laws. Hence the law of X is unique.

    Finally, we consider the case of $X_0=0.$ What follows is almost direct
application of the proof of Theorem~1.1 in \cite{elldiff}. For any $k\in \N$
and open set $U\subseteq\RR^k,$ define a measurable function $F_U\colon
(0,\infty)^k\to
[0,1],F_U(t_1,\ldots,t_k):=\P_\Psi[(\psi_{t_1},\ldots,\psi_{t_k})\in U],$ where
law $\P_\Psi[\cdot]$ is defined in~\cite[Lemma~3.7]{elldiff}.  Letting
$\mathcal{F}_\infty^R:=\sigma(R_u,u\in\RR_+)$, we apply  
Lemma~\ref{timechg} and Theorem~\ref{skew} above and \cite[Lemma~3.12]{elldiff} to get $\P\left[\left(X_{t_1}/R_{t_1},\ldots,
X_{t_k}/R_{t_k}\right)\in U\middle|
\mathcal{F}_\infty^R\right]=F_U(S_s(t_1),\ldots,S_s(t_k))\quad\text{a.s.}$ for
$0<s<t_1<\cdots<t_k.$ Hence $\P\left[\left(X_{t_1}/R_{t_1},\ldots,
X_{t_k}/R_{t_k}\right)\in U\right]=\E[F_U(S_s(t_1),\ldots,S_s(t_k))]$ and
therefore the finite-dimensional distributions of $({X_t}/{R_t},t>0)$ are determined uniquely
by $\P_\Psi[\cdot]$ and law of $R$. Moreover, 
law of $X$ is determined uniquely by the law of $(R,{X}/R),$ therefore uniquely
by $\P_\Psi[\cdot]$ and law of process $R$ solving SDE~\eqref{enacbaradij}
started at $0$.  \end{proof}

\subsection{Pathwise uniqueness  for SDE~\eqref{enacbax}}

		Let $X$ and $\widetilde{X}$ be solutions to the SDE~\eqref{enacbax} driven by the same Brownian motion $B$ and started at the same point $x_0\in \B^n.$ 
		Pathwise uniqueness clearly holds up to the hitting time of boundary, so by restarting argument it is enough to prove pathwise uniqueness for starting points $x_0$ on the boundary. 
		Furthermore, it is enough to prove that $X_t=\widetilde{X}_t$ for $t\le \tau_\varepsilon$ where $\tau_\varepsilon=\inf\set{t\ge0}{\abs{X_t}^2\wedge\absX{t}^2\le1-\varepsilon}$ for some $\varepsilon>0$ and without loss of generality we can clearly assume that $\varepsilon<\frac{1}{2}.$ 
		To prove equality of the processes we will apply method of
DeBlassie \cite{deblassie-uniq}. Namely, we wish to use Gronwall's lemma, but
due to non-Lipschitzness we cannot apply it directly to
$\E[|X_t-\widetilde{X}_t|^2].$ The idea of DeBlassie (and Swart before with
$p=\frac{1}{2}$) is to denote $Y:=1-\abs{X}^2, \widetilde{Y}:=1-\absX{}^2$ and
look at the process $W:=|X-\widetilde{X}|^2+(Y^p-\widetilde{Y}^p)^2$ for some
$p\in(\frac{1}{2},1).$   We then have
		\begin{equation*}\d Y_t=-2\gamma(\abs{X_t})
Y_t^{1/2}\sum_{i=1}^nX^i_t\d B^i_t- n\gamma^2(\abs{X_t}) Y_t\d
t+\left(2g(\abs{X_t})-(n-1)\gamma^2(\abs{X_t})\right)\abs{X_t}^2\d t.
\end{equation*}
A slight modification of 
\cite[Lemma~2.1]{deblassie-uniq} implies that for
$p>1+\frac{n-1}{2}-\frac{g(1)}{\gamma^2(1)}$ a formal application of It\^o's
formula for the mapping $x \mapsto x^p$ is justified.  Defining 
\begin{equation*}G(u):=g(u)-\frac{n-1}{2}\gamma^2(u)+(p-1)\gamma^2(u),
\end{equation*}we get 
\begin{equation*}\d Y^p_t= -2p\gamma(\abs{X_t}) Y_t^{p-1/2}\sum_{i=1}^nX^i_t\d
B^i_t+2pY^{p-1}_t\abs{X_t}^2\ind(Y>0)G(\abs{X_t})\d t -np\gamma^2(\abs{X_t})
Y^p_t\d t,\end{equation*}where $t\le\tau_\varepsilon$, $\abs{X_0}^2>1-\varepsilon,$ and 
$\varepsilon=\varepsilon(p)$ is chosen in such a way that
$p>1+\frac{n-1}{2}-\frac{g(u)}{\gamma^2(u)}$ for $u\in (1-\varepsilon(p),1]$.
The latter condition is necessary to keep second term on the right hand side negative to
allow use of Fatou's lemma. Furthermore,  $\int_0^t\ind(Y_s=0)\d s=0$ holds.
Note that essentially all necessary calculations and results are the same as in
\cite{deblassie-uniq} if we change their $g$ for $g-\frac{n-1}{2}\gamma^2$.
Subtracting the equations for $Y^p$ and $\widetilde{Y}^p$ we get \begin{align*}
			\d(Y^p-\widetilde{Y}^p)_t &= -2p\sum_{i=1}^n\left(\gamma(\abs{X_t})Y_t^{p-1/2}X^i_t-\gamma(\absX{t})\widetilde{Y}_t^{p-1/2}\widetilde{X}^i_t\right)\d B^i_t\\
			&+2p\left(Y^{p-1}_t\abs{X_t}^2\ind(Y>0)G(\abs{X_t})-\widetilde{Y}^{p-1}_t\absX{t}^2\ind(\widetilde{Y}>0)G(\absX{t})\right)\d t\\
			&-np\left(\gamma^2(\abs{X_t})Y_t^p-\gamma^2(\absX{t})\widetilde{Y}_t^p\right)\d t\\
			&=:dM_t+\I_1\d t + \I_2\d t
		\end{align*}
		and It\^o's formula yields 
				\begin{align*}
		\d(Y^p-\widetilde{Y}^p)^2_t &=2(Y_t^p-\widetilde{Y}_t^p)(dM_t+\I_1\d t + \I_2\d t)+4p^2\sum_{i=1}^n\left(\gamma(\abs{X_t})Y_t^{p-1/2}X^i_t-\gamma(\absX{t})\widetilde{Y}_t^{p-1/2}\widetilde{X}^i_t\right)\d t\\
		&=:d\widetilde{M}_t+2(Y_t^p-\widetilde{Y}_t^p)(\I_1\d t + \I_2\d t)+\I_3\d t.
		\end{align*}
		We can also compute 
		\begin{align*}
			\d |X_t-\widetilde{X}_t|^2&=2\sum_{i=1}^n(X_t^i-\widetilde{X}_t^i)\sum_{j=1}^n\left(\gamma(\abs{X_t})\sigma_{ij}(X_t)-\gamma(\absX{t})\sigma_{ij}(\widetilde{X}_t)\right)\d B_t^j\\
			&-2 \sum_{i=1}^n(X_t^i-\widetilde{X}_t^i)\left(g(\abs{X_t})X^i_j-g(\absX{t})\widetilde{X}^i_t\right)\d t\\
			&+\sum_{i,j=1}^n(\gamma(\abs{X_t})\sigma_{ij}(X_t)-\gamma(\absX{t})\sigma_{ij}(\widetilde{X}_t))^2\d t\\
			&=:\d N_t+\I_4 \d t + \I_5 \d t.
		\end{align*}
		The term $\I_5$ is the one which disallows direct use of Gronwall's lemma. It is singular in a sense that $\frac{\I_5}{W}$ can be arbitrarily large. Another singular term is $\I_3,$ but fortunately we also have negative singular term $2(Y_t^p-\widetilde{Y}_t^p)\I_1$, which will ensure that altogether we stay non-singular.
		We will bound all the terms $\I_k$  and since $\I_1,\I_2,\I_3$ and $\I_4$ are exactly the same\footnote{Our $G$ is defined slightly differently but it is still Lipschitz, so everything works.} as in \cite{deblassie-uniq} in Lemmas~3.1, 3.2, 3.4, and 3.5, respectively, we will not do the calculations but only summarize final results.
	 Let us introduce non-negative process $Z:=(Y^p-\widetilde{Y}^p)(\widetilde{Y}^{p-1}-Y^{p-1}).$ To make sense of $Z_t$ we implicitly multiply everything  by $\ind(Y_t>0,\widetilde{Y}_t>0)$. We will use this convention until the end of the proof. Then we have 
		\begin{align*}
			(Y_t^p-\widetilde{Y}_t^p)\I_1&\le-2pZ_t\abs{X_t}^2G(\abs{X_t})+C_1\varepsilon Z_t,\\
			\abs{\I_2}&\le C_2\left(|Y^p_t-\widetilde{Y}^p_t|+|X_t-\widetilde{X}_t|\right),\\
			\I_3&\le \frac{p(2p-1)^2}{1-p}\gamma^2(\abs{X_t})\abs{X_t}^2Z_t+C_3 |X_t-\widetilde{X}_t|^2+ C_3\varepsilon Z_t,\\
			\I_4&\le C_4|X_t-\widetilde{X}_t|^2,
		\end{align*}
		where constants $C_1,C_2,C_3,$ and $C_4$ are independent of $\varepsilon.$  Bound for $\I_5$ has to be done differently due to non-diagonal nature of our SDE.  By  straightforward computation e.g. by computing Frobenius norm of the matrix $\gamma(\abs{X_t})\sigma(X_t)-\gamma(\absX{t})\sigma(\widetilde{X}_t),$ we see that 
		\begin{align*}
			\I_5&=\gamma^2(\abs{X_t})\left(1-\sqrt{1-\abs{X_t}^2}\right)^2+ \gamma^2(\absX{t})\left(1-\sqrt{1-\absX{t}^2}\right)^2\\
			&-2\gamma(\abs{X_t})\gamma(\absX{t})\left(1-\sqrt{1-\abs{X_t}^2}\right)\left(1-\sqrt{1-\absX{t}^2}\right)\frac{\left(X_t\cdot\widetilde{X}_t\right)^2}{\abs{X_t}^2\absX{t}^2}\\
			&=\left(\gamma(\abs{X_t})Y_t^{1/2}-\gamma(\absX{t})\widetilde{Y}_t^{1/2}+\gamma(\absX{t}) -\gamma(\abs{X_t})\right)^2\\
			&+2\gamma(\abs{X_t})\gamma(\absX{t})\left(1-\sqrt{1-\abs{X_t}^2}\right)\left(1-\sqrt{1-\absX{t}^2}\right)\frac{\abs{X_t}^2\absX{t}^2-\left(X_t\cdot\widetilde{X}_t\right)^2}{\abs{X_t}^2\absX{t}^2}.
		\end{align*}
For the first term we can use Cauchy-Schwartz inequality to bound it from above by
$$2\left(\gamma(\abs{X_t})Y_t^{1/2}-\gamma(\absX{t})\widetilde{Y}_t^{1/2}\right)^2+2\left(\gamma(\absX{t})
-\gamma(\abs{X_t})\right)^2\leq
C\left(\varepsilon^{2-p}Z_t+|X_t-\widetilde{X}_t|^2\right),$$ where the inequality follows from  
the proof of \cite[Lemma~3.6]{deblassie-uniq} and the Lipschitz continuity of $\gamma$. 
		The second term can by our assumptions be bounded by $8(\sup\gamma)^2\left( \abs{X_t}^2\absX{t}^2-\left(X_t\cdot\widetilde{X}_t\right)^2\right).$ 
Using $2X_t\cdot\widetilde{X}_t=\abs{X_t}^2+\absX{t}^2-|X_t-\widetilde{X}_t|^2$ we get  
$\abs{X_t}^2\absX{t}^2-\left(X_t\cdot\widetilde{X}_t\right)^2
\le|X_t-\widetilde{X}_t|^2$. Hence
		we find $\I_5\le C_5(\varepsilon^{2-p}Z_t+|X_t-\widetilde{X}_t|^2)$ with $C_5$ independent of $\varepsilon.$ Using above facts we get 
		\begin{align*}
			2(Y_t^p-\widetilde{Y}_t^p)\I_1+\I_3+\I_5&\le-4pZ_t\abs{X_t}^2G(\abs{X_t})+C\varepsilon Z_t\\
			&+\frac{p(2p-1)^2}{1-p}\abs{X_t}^2Z_t+C |X_t-\widetilde{X}_t|^2+ C\varepsilon Z_t\\
			&+C(\varepsilon^{2-p}Z_t+|X_t-\widetilde{X}_t|^2)\\
			&=4pZ_t\gamma^2(\abs{X_t})\abs{X_t}^2\left(1-p+\frac{(2p-1)^2}{4(1-p)}+\frac{n-1}{2}-\frac{g(\abs{X_t})}{\gamma^2(\abs{X_t})}\right)\\
			&+C(2\varepsilon+\varepsilon^{2-p})Z_t+2C|X_t-\widetilde{X}_t|^2.
		\end{align*} Note that expression $1-p+\frac{(2p-1)^2}{4(1-p)}$ is minimized at $p=1-\frac{\sqrt{2}}{4}$ and the value is then $\sqrt{2}-1.$ Therefore we use initial assumption that $\frac{g(1)}{\gamma^2(1)}-\frac{n-1}{2}>\sqrt{2}-1$ to ensure the whole bracket is negative. Note also that such choice of $p$ implies $p=1-\frac{\sqrt{2}}{4}>1-(\sqrt{2}-1)>1+\frac{n-1}{2}-\frac{g(1)}{\gamma^2(1)}$ so all previous calculations are justifiable since we have necessary condition our use of Lemma~2.1 from \cite{deblassie-uniq}. Fixing $p=1-\frac{\sqrt{2}}{4}$ we then let $\varepsilon$ possibly be even smaller to ensure that $\frac{g(u)}{\gamma^2(u)}-\frac{n-1}{2}>\sqrt{2}-1+\delta$ holds on $(1-\varepsilon,1]$ for some small fixed $\delta>0$. The coefficient in front of $Z_t$ equals 
		$$4p\gamma(\abs{X_t})\abs{X_t}^2\left(\sqrt{2}-1+\frac{n-1}{2}-\frac{g(\abs{X_t})}{\gamma^2(\abs{X_t})}\right)+C(2\varepsilon+\varepsilon^{2-p})\le -4p(\inf\gamma^2)(1-\varepsilon)\delta+C(2\varepsilon+\varepsilon^{2-p}).$$
		Therefore, by letting $\varepsilon$ be small enough we ensure that this coefficient in front of non-negative $Z_t$ is negative and bound 
		$2(Y_t^p-\widetilde{Y}_t^p)\I_1+\I_3+\I_5\le C|X_t-\widetilde{X}_t|^2$ follows.
		Recall that 
		$$\d W_t= d\widetilde{M}_t+\d N_t+2(Y_t^p-\widetilde{Y}_t^p)\I_1\d t + 2(Y_t^p-\widetilde{Y}_t^p)\I_2\d t+\I_3\d t+\I_4 \d t + \I_5 \d t$$ and let $\widetilde{\tau}_m$ be a localizing sequence of stopping times for local martingale $\widetilde{M}+N$.
		 Then using above bounds and the fact that $\int_0^t\ind(Y_s=0 \ \mathrm{or} \ \widetilde{Y}_s=0)\d s=0$ yields
		 \begin{align*}
		 	\E[W_{t\wedge\tau_\varepsilon\wedge\widetilde{\tau}_m}]&=\E\left[\int_0^{t\wedge\tau_\varepsilon\wedge\widetilde{\tau}_m}\left(2(Y_s^p-\widetilde{Y}_s^p)(\I_1+\I_2)+\I_3+\I_4 + \I_5 \right)\ind(Y_s>0,\widetilde{Y}_s>0)\d s \right]\\
		 	&\le C\E\left[\int_0^{t\wedge\tau_\varepsilon\wedge\widetilde{\tau}_m}\left(|X_s-\widetilde{X}_s|^2 +2(Y^p_s-\widetilde{Y}^p_s)^2+2|Y^p_s-\widetilde{Y}^p_s||X_s-\widetilde{X}_s|\right)\d s  \right]\\
		 	&\le 3C\int_0^{t\wedge\tau_\varepsilon\wedge\widetilde{\tau}_m} \E\left[W_s\right]\d s  
		 \end{align*}
		 and Gronwall's lemma implies that $\E[W_t]=0$ and by non-negativity also $W_t=0$ for $t\le \tau_\varepsilon\wedge\widetilde{\tau}_m$. Letting $m\to \infty$ we get $W_t=0$ for $t<\tau_\varepsilon$. Therefore $X_t=\widetilde{X}_t$ for $t<\tau_\varepsilon$ and pathwise uniqueness 
in Theorem \ref{pathuniq} follows.

\bibliographystyle{amsalpha}
\bibliography{source}

\providecommand{\bysame}{\leavevmode\hbox to3em{\hrulefill}\thinspace}
\providecommand{\MR}{\relax\ifhmode\unskip\space\fi MR }
\providecommand{\MRhref}[2]{%
  \href{http://www.ams.org/mathscinet-getitem?mr=#1}{#2}
}
\providecommand{\href}[2]{#2}
\begin{thebibliography}{BGMN05}

\bibitem[AM13]{MR3035124}
T.~M. Apostol and M.~A. Mnatsakanian, \emph{New balancing principles applied to
  circumsolids of revolution, and to {$n$}-dimensional spheres, cylindroids,
  and cylindrical wedges}, Amer. Math. Monthly \textbf{120} (2013), no.~4,
  298--321. \MR{3035124}

\bibitem[ArcBC]{archimedes}
Archimedes, \emph{On the sphere and cylinder}, ca. 225 BC.

\bibitem[Bak96]{bakry1996remarques}
D.~Bakry, \emph{Remarques sur les semigroupes de {J}acobi}, Ast{\'e}risque
  \textbf{236} (1996), 23--40.

\bibitem[BGMN05]{MR2123199}
F.~Barthe, O.~Gu\'edon, S.~Mendelson, and A.~Naor, \emph{A probabilistic
  approach to the geometry of the {$l^n_p$}-ball}, Ann. Probab. \textbf{33}
  (2005), no.~2, 480--513. \MR{2123199}

\bibitem[Che00]{chernybessel}
A.S. Cherny, \emph{On the strong and weak solutions of stochastic differential
  equations governing {B}essel processes}, Stochastics and Stochastic Reports
  \textbf{70} (2000), no.~3-4, 213--219.

\bibitem[Che01]{chernybesselconv}
A.~S. Cherny, \emph{Convergence of some integrals associated with {B}essel
  processes}, Theory of Probability \& Its Applications \textbf{45} (2001),
  no.~2, 195--209.

\bibitem[DeB04]{deblassie-uniq}
D.~DeBlassie, \emph{Uniqueness for diffusions degenerating at the boundary of a
  smooth bounded set}, Ann. Probab. \textbf{32} (2004), no.~4, 3167--3190.

\bibitem[GMW18]{elldiff}
N.~{Georgiou}, A.~{Mijatovi{\'c}}, and A.~R. {Wade}, \emph{{Invariance
  principle for non-homogeneous random walks}}, ArXiv e-prints, 1801.07882
  (2018).

\bibitem[Hsu02]{hsumanifolds}
E.P. Hsu, \emph{Stochastic analysis on manifolds}, Graduate Studies in
  Mathematics 38, vol.~38, American Mathematical Society, 2002.

\bibitem[IW89]{ikedawatanabe}
N.~Ikeda and S.~Watanabe, \emph{Stochastic differential equations and diffusion
  processes}, Kodansha scientific books, North-Holland, 1989.

\bibitem[PR12]{MR3025901}
J.~Pitman and N.~Ross, \emph{Archimedes, {G}auss, and {S}tein}, Notices Amer.
  Math. Soc. \textbf{59} (2012), no.~10, 1416--1421. \MR{3025901}

\bibitem[RY99]{revuzyor}
D.~Revuz and M.~Yor, \emph{Continuous martingales and {B}rownian motion}, 3rd
  ed ed., Springer, 1999.

\bibitem[Swa02]{swart-uniq}
J.~M. Swart, \emph{{Pathwise uniqueness for a SDE with non-Lipschitz
  coefficients}}, Stochastic Processes and their Applications \textbf{98}
  (2002), no.~1, 131--149.

\bibitem[WY98]{WarrenYorJacobi}
J.~Warren and M.~Yor, \emph{The {B}rownian burglar: conditioning {B}rownian
  motion by its local time process}, S{\'e}minaire de Probabilit{\'e}s XXXII
  (Berlin, Heidelberg) (Jacques Az{\'e}ma, Marc Yor, Michel {\'E}mery, and
  Michel Ledoux, eds.), Springer Berlin Heidelberg, 1998, pp.~328--342.

\bibitem[YW71]{yamadawatanabe}
T.~Yamada and S.~Watanabe, \emph{On the uniqueness of solutions of stochastic
  differential equations}, J. Math. Kyoto Univ. \textbf{11} (1971), no.~1,
  155--167.

\end{thebibliography}
     
\end{document}